\newtheorem{theorem}{Theorem}[section]
\newtheorem{lemma}{Lemma}[section]
\newtheorem{proposition}{Proposition}[section]
\theoremstyle{definition}
\newtheorem{remark}{Remark}[section]
\numberwithin{equation}{section}
\newcommand\blfootnote[1]{\begingroup\renewcommand\thefootnote{}\footnote{#1}\addtocounter{footnote}{-1}\endgroup}
\begin{document}

\title{
{\bf\Large Pairs of nodal solutions for a Minkowski-curvature boundary value problem \hspace{1.5cm} in a ball}}

\author{
\vspace{1mm}
\\
{\bf\large Alberto Boscaggin}
\vspace{1mm}\\
{\it\small Dipartimento di Matematica, Universit\`a di Torino}\\
{\it\small via Carlo Alberto 10}, {\it\small 10123 Torino, Italy}\\
{\it\small e-mail: alberto.boscaggin@unito.it}\vspace{1mm}\\
\vspace{1mm}\\
{\bf\large Maurizio Garrione}
\vspace{1mm}\\
{\it\small Dipartimento di Matematica e Applicazioni, Universit\`a di Milano-Bicocca}\\
{\it\small via Cozzi 55}, {\it\small 20125 Milano, Italy}\\
{\it\small e-mail: maurizio.garrione@unimib.it}\vspace{1mm}\\}

\date{}

\maketitle

\vspace{-2mm}

\begin{abstract}
By using a shooting technique, we prove that the quasilinear boundary value problem 
$$
\left\{
\begin{array}{ll}
\displaystyle \textnormal{div} \, \left( \frac{\nabla u}{\sqrt{1-\vert \nabla u \vert^2}}\right) +  \lambda q(\vert x \vert) \vert u \vert^{p-1} u = 0, & \quad x \in \mathcal{B}, \vspace{0.2 cm} \\
u = 0, & \quad x \in \partial \mathcal{B},
\end{array}
\right.
$$
where $\mathcal{B} \subset \mathbb{R}^N$ is a ball and $p > 1$, has more and more pairs of nodal solutions on growing of the parameter $\lambda > 0$. The radial Neumann problem and the periodic problem for the corresponding one-dimensional equation are considered, as well. 
\blfootnote{\textit{AMS Subject Classification: 35J25, 35J62, 35A24}.}
\blfootnote{\textit{Keywords: radial solutions, Minkowski curvature operator, pairs of solutions, shooting method.} }
\end{abstract}

\section{Introduction}

In this paper, we study existence and multiplicity of radial solutions of the quasilinear equation
\begin{equation}\label{equaz}
\textnormal{div} \, \left( \frac{\nabla u}{\sqrt{1-\vert \nabla u \vert^2}}\right) +  f(\vert x \vert,u) = 0,  \qquad x \in \mathcal{B}_R,
\end{equation}
where $\mathcal{B}_R \subset \mathbb{R}^N$ is the ball of radius $R$, for $N \geq 2$. 
\smallbreak
As well known, the above differential equation can be meant as a prescribed mean curvature equation in the Minkowski space
\cite{BarSim82,Fla79,Ger83}; in recent years, the solvability of the associated boundary value problems - even in the non-radial setting - has gained a lot of interest among researchers in the field of Nonlinear Analysis (see, for instance, 
\cite{BerJebMaw09,BerJebMaw14} and the references therein).  
\smallbreak
The model case for our investigation will be the Dirichlet boundary value problem
\begin{equation}\label{equazione0}
\left\{
\begin{array}{ll}
\displaystyle \textnormal{div} \, \left( \frac{\nabla u}{\sqrt{1-\vert \nabla u \vert^2}}\right) +  \lambda q(\vert x \vert) \vert u \vert^{p-1} u = 0, & \quad x \in \mathcal{B}_R, \vspace{0.2 cm} \\
u = 0, & \quad x \in \partial \mathcal{B}_R,
\end{array}
\right.
\end{equation}
where $q: [0,R] \to \mathbb{R}$ is a continuous function, $p > 1$ and $\lambda$ is a positive parameter.
As shown in \cite{BerJebTor13,CoeCorRiv14}, the role of $\lambda$ is crucial for the existence of \emph{positive} solutions of \eqref{equazione0}. In particular, it was proved therein that, when $q^+ \not\equiv 0$ and $\lambda > 0$ is large enough, two positive (and two negative) solutions appear, while in general nonexistence holds for $\lambda \to 0^+$ (see also \cite{CoeCorObeOma12} for a previous achievement in a one-dimensional setting). 
Later on, such a result has been extended to the genuine PDE setting in \cite{BerJebMaw14,CorObeOmaRiv13}.
\smallbreak
The aim of this paper is to show that more and more \emph{nodal} (i.e., sign-changing) 
solutions of \eqref{equazione0} appear as well, on growing of the parameter $\lambda$. More precisely, as a corollary of our main result, we can state the following theorem.

\begin{theorem}\label{mainthm}
Let $p > 1$ and let $q: [0, R] \to \mathbb{R}$ be a continuous function such that $q(r_0) > 0$ for some $r_0 \in [0, R]$. Then, for any integer $k \geq 1$, there exists $\lambda^*(k) > 0$ such that, for every $\lambda > \lambda^*(k)$, problem \eqref{equazione0} has at least $4k$ nodal radial solutions. 
\end{theorem}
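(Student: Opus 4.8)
The plan is to reduce the PDE to the radial ODE and then apply a shooting argument. Writing $u(x) = v(|x|)$, equation \eqref{equazione0} becomes the singular one-dimensional problem
$$
\left( r^{N-1} \frac{v'}{\sqrt{1-(v')^2}} \right)' + \lambda r^{N-1} q(r) |v|^{p-1} v = 0, \qquad r \in (0,R),
$$
with $v'(0) = 0$ and $v(R) = 0$. A radial solution with exactly $k$ interior nodes (plus the zero at $r=R$) corresponds to a solution of this ODE changing sign $k$ times on $(0,R)$; the $4k$ count will come from $k$ different nodal types, each yielding two solutions via a degree/shooting argument and then doubling via the symmetry $v \mapsto -v$ of the equation (odd nonlinearity). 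So it suffices to produce, for each $1 \le j \le k$, two solutions with exactly $j-1$ interior zeros, or more precisely a pair at each nodal level, whose total is $4k$.

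The core is a shooting map in the phase plane. For the initial datum $v(0) = d > 0$ (and $v'(0) = 0$), let $v(\cdot; d)$ denote the solution and track the "rotation number" — the number of zeros of $v(\cdot;d)$ on $(0,R]$. The crucial point is to control this rotation number as $d$ ranges over $(0,\infty)$ and $\lambda$ grows. First I would note that the Minkowski operator forces $|v'| < 1$, which keeps solutions globally defined and bounded — no blow-up, unlike the $p$-Laplacian case — so the shooting is always well-posed. Then I would establish two facts: (i) for $d$ small, the solution stays close to the zero function scaled suitably and has few zeros (in fact, one can show the rotation number is small for $d$ near $0$, using that the nonlinearity is superlinear, $p>1$, so small data give slow rotation); (ii) for $\lambda$ large, one can make the rotation number as large as desired on $(0,R]$ for suitably chosen $d$, by exploiting the positivity of $q$ near $r_0$ and a comparison with a linear equation with large coefficient there — a Sturm-type oscillation estimate showing many zeros accumulate in the region where $\lambda q$ is large. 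Given monotone-type or at least continuous dependence of the rotation number on $d$, the intermediate value theorem then produces, for each prescribed number $j \in \{1, \dots, k\}$ of zeros in $(0,R)$, at least one shooting datum $d_j$ with $v(\cdot; d_j)$ having exactly $j$ zeros on $(0,R]$ and $v(R;d_j)=0$; doing this on both "sides" (spanning the rotation from small to large) gives two such $d$ for each $j$, and the reflection $v \mapsto -v$ doubles again, totalling $4k$.

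The main obstacle I expect is establishing the continuity and the attainment of prescribed values of the rotation/shooting function with enough precision to guarantee the full count $4k$ rather than merely "many" solutions — in particular, ruling out that the rotation number jumps over an integer (which requires some care at $d \to 0^+$, where the equation degenerates, and uniqueness/continuous dependence for the singular ODE at $r=0$), and quantifying how large $\lambda^*(k)$ must be so that the rotation number genuinely exceeds $k$. A secondary technical point is handling the singularity at $r=0$: I would treat it via the usual change to an equivalent integral equation or an energy/Prüfer-type transformation adapted to the Minkowski operator, checking that the associated angular variable is well-defined and strictly increasing through the zeros of $v$. Once the rotation number is shown to be a continuous function of $d$ that is bounded for small $d$ and unbounded (exceeding $k$) for $\lambda > \lambda^*(k)$, the conclusion follows from elementary topology together with the odd symmetry, with no further hard analysis needed.
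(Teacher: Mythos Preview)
Your overall strategy coincides with the paper's: reduce to the radial ODE, shoot from $v(0)=d$, $v'(0)=0$, track an angular/rotation quantity, show it is small for $d$ near $0$ (using $p>1$), and show it can be made large for suitable $d$ once $\lambda$ is big (via a Sturm-type estimate on a subinterval where $q>0$). The paper also passes to polar coordinates in the $(u, r^{N-1}\varphi(u'))$-plane and uses the intermediate value theorem exactly as you describe.

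There is, however, a genuine gap in your counting. To obtain \emph{two} shooting data per nodal level (and hence $4k$ after the odd symmetry), you need a \emph{double gap}: the rotation must be small both for $d\to 0^+$ \emph{and} for $d$ large, with a peak in between. You establish only the small-$d$ endpoint and the existence of a high-rotation intermediate value; with just those two facts, the intermediate value theorem yields one $d$ per level, so only $2k$ solutions after reflection. Your phrase ``doing this on both sides'' is not backed by any argument controlling the large-$d$ behaviour, and the remark that $|v'|<1$ ``keeps solutions globally defined and bounded'' addresses existence, not rotation. The missing observation is that $|v'|<1$ forces $v(r)\ge d-r>0$ on $[0,R]$ whenever $d>R$, so such solutions never vanish and the rotation is below $\pi/2$; this closes the second gap. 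The paper implements this via a truncation device: it replaces $g$ by a function vanishing for $|u|\ge R+1$ (and simultaneously regularises $\varphi$), proves the truncated and original problems have the same solutions, and then the constant $u\equiv R+1$ makes the large-$d$ endpoint trivial, while the truncation also gives clean global existence and continuous dependence for the singular Cauchy problem at $r=0$.

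A minor remark: you use the odd symmetry $v\mapsto -v$ to pass from $2k$ to $4k$. This is legitimate for the specific nonlinearity $|u|^{p-1}u$, but the paper instead repeats the shooting argument independently for $d<0$, which is why its more general Theorem~3.1 requires only a local sign condition on $g$ and no oddness.
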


We anticipate that the above solutions will be distinguished according to their number of nodal regions; we refer to Theorem
\ref{mainthm2} for a precise description, in a more general setting.
\smallbreak
The underlying idea of the high-multiplicity scheme in Theorem \ref{mainthm} can be traced back to a paper by Rabinowitz \cite{Rab74}, dealing however with a semilinear second order equation like $u'' + q(x)g(u) = 0$, with $g(u)u < 0$ for $\vert u \vert$ large. Recently, this pattern has been shown to emerge in several different situations, both for ordinary \cite{BosZan13} and partial differential equations \cite{LopRab16}, always requiring a \emph{sublinear} behavior for $g(u)$ at infinity, 
i.e., $\limsup_{\vert u \vert \to \infty}g(u)/u \leq 0$.
Theorem \ref{mainthm} can thus be seen as a further step in this line of research; it is remarkable, however, that the sublinearity of $g(u)$ at infinity is not needed, due to the peculiar properties of the Minkowski-curvature operator.
\smallbreak
As for the proof of Theorem \ref{mainthm}, we adopt a shooting approach for the equivalent ODE formulation
$$
\left\{
	\begin{array}{l}
		\displaystyle (r^{N-1} \varphi(u'))' + \lambda r^{N-1} q(r) \vert u \vert^{p-1} u = 0 \vspace{0.2 cm}\\
		u'(0)=0, \; u(R)=0, 
	\end{array}
	\right.
$$
that is, we consider the associated singular Cauchy problem 
$$
u'(0)=0, \quad u(0)=\eta,
$$
on varying of the initial datum $\eta \in \mathbb{R}$, looking for values $\eta \neq 0$ such that $u(R)=0$. A careful study shows that small 
(i.e., $0 < \vert \eta \vert \to 0$) and large (i.e., $\vert \eta \vert > R$) solutions 
do not rotate around the origin in the phase-plane $(u,r^{N-1}\varphi(u'))$, while intermediate ones rotate more and more on growing of $\lambda$. As a consequence, similarly as in \cite{BosZan13}, a ``double-gap'' picture for the winding number is created and multiple \emph{pairs} of solutions with precise nodal characterization can be provided.
\smallbreak
It is worth noticing that the above argument also allows us to easily recover the existence of four one-signed radial solutions 
(two positive ones and two negative ones) already proved in \cite{BerJebTor13,CoeCorRiv14} with topological and variational techniques, respectively, see Remark \ref{positive}. Incidentally, we mention that a shooting approach has been recently exploited to investigate the existence of radial ground-state solutions, as well \cite{Azz14,Azz16}.
\smallbreak
The plan of the paper is the following. In Section \ref{sez2} we present a preliminary technical result, providing solutions rotating around the origin for a general planar Hamiltonian system. In Section \ref{sez3} we state and prove our main result,
dealing with a nonlinearity of the type $f(\vert x \vert, u) =\lambda q(\vert x \vert)g(u)$, under minimal assumptions on $g(u)$. We then show some numerical simulations in order to ease the reader's comprehension of the statement. Finally, Section \ref{sez4} is devoted to several remarks about possible extensions and related results.
In particular, we deal with the periodic problem for the one-dimensional counterpart of \eqref{equaz}, namely
$$
\left( \frac{u'}{\sqrt{1-(u')^2}} \right)' +  f(t,u) = 0,  \qquad t \in \mathbb{R},
$$
often used to model the relativistic version of Newton's law (see \cite{Maw13} and its rich bibliography).

\section{An auxiliary result}\label{sez2}

In this section, we are going to present an auxiliary result dealing with a planar Hamiltonian system of the type
\begin{equation}\label{sistema}
	\left\{
	\begin{array}{l}
		\displaystyle x' = X(t, y) \vspace{0.1cm} \\
		\displaystyle y' = -Y(t, x),  
	\end{array}
	\right.
\end{equation}
where $X,Y: I \times \mathbb{R} \to \mathbb{R}$ are locally Lipschitz continuous functions.
Roughly speaking, we are going to prove that, whenever suitable (local) sign-conditions are assumed, the number of rotations around the origin of the planar path $(x(t),y(t))$ becomes arbitrarily large on growing of the width of the interval $I$.

To this end, we will write (whenever possible, namely for $x(t)^2 + y(t)^2 > 0$)
solutions of \eqref{sistema} in (clockwise) polar coordinates as
\begin{equation}\label{polari2}
x(t) = \rho(t) \cos \vartheta(t), \qquad y(t) = - \rho(t) \sin\vartheta(t),
\end{equation}
with $\rho(t) > 0$. Notice that, of course, the angular coordinate $\vartheta(t)$ is defined up to integer multiples of $2\pi$; however, the expression $\vartheta(t_2) - \vartheta(t_1)$, for any $t_1,t_2 \in I$, is uniquely determined, depending on the path $(x(t),y(t))$ only.

With this in mind, we state and prove the following result, which is a slight variant of \cite[Lemma 3.2]{BosZan13}.
 
\begin{proposition}\label{rotazioni}
Let $a_i, b_i: (-\delta, \delta) \to \mathbb{R}$, with $i=1,2$, be locally Lipschitz continuous functions such that 
\begin{equation}\label{controllogh}
0 < a_1(s) s \leq b_1(s) s \quad \textrm{ and } \quad 0 < b_2(s) s \leq a_2(s) s, \quad \textrm{ for every } s \neq 0.   
\end{equation}
Then, for every positive integer $j$, there exist $\tau_j^* > 0$ and $\rho_j^* \in (0, \delta)$ such that, for every interval $I=[t_0, t_1] \subset \mathbb{R}$ with $t_1 - t_0 > \tau_j^*$ and for every locally Lipschitz continuous functions $X, Y: I \times \mathbb{R} \to \mathbb{R}$ satisfying 
$$
a_1(y) y \leq X(t, y) y \leq b_1(y) y, \quad \textrm{ for every } t \in I, \,y \in (-\delta, \delta)
$$
and
$$
b_2(x) x \leq Y(t, x) x \leq a_2(x) x, \quad \textrm{ for every } t \in I, \,x \in (-\delta, \delta),
$$
it holds that any solution $(x(t), y(t))$ of \eqref{sistema}, defined on $I$ and satisfying  $x(t_0)^2 + y(t_0)^2 = (\rho_j^*)^2$, fulfills $x(t)^2 + y(t)^2 > 0$ for every $t \in I$ and  
$$
\vartheta(t_1) - \vartheta(t_0) > j\pi. 
$$
\end{proposition}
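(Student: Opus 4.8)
The plan is to compare the given system with the autonomous planar systems governed by the coefficient pairs $(a_1,a_2)$ and $(b_1,b_2)$, and to use a standard rotation-number comparison in polar coordinates, the point being that the monotonicity hypotheses \eqref{controllogh} force the comparison systems themselves to rotate, so the rotation of $(x(t),y(t))$ is squeezed between two divergent quantities as $|I|\to\infty$. First I would fix a small annulus: since we need $x(t)^2+y(t)^2>0$ on all of $I$, the first task is an a priori confinement estimate showing that, if the solution starts on a sufficiently small circle of radius $\rho_j^*\in(0,\delta)$, then it cannot reach the origin and, more importantly, it stays inside the strip where the sign and monotonicity conditions on $X,Y$ are in force (i.e. $|x(t)|,|y(t)|<\delta$) at least long enough to accumulate the required angle. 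This is where the one-sided inequalities $0<a_1(s)s\le b_1(s)s$ and $0<b_2(s)s\le a_2(s)s$ enter: they guarantee that the energy-like function built from the comparison system $x'=a_1(y)$-type dynamics has closed level curves near the origin whose period is finite, so orbits starting close enough to $0$ remain trapped in a small neighborhood for a controlled amount of time.

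Next I would write the angular velocity. Differentiating \eqref{polari2} and using \eqref{sistema}, one gets the usual formula
$$
-\vartheta'(t) = \frac{X(t,y(t))\,\dots }{\rho(t)^2}
$$
— more precisely, with the clockwise convention, $\vartheta'(t)\rho(t)^2 = X(t,y(t))\,y(t)\sin\vartheta - (\textrm{cross terms})$; after grouping, $\vartheta'(t) = \frac{1}{\rho^2}\big(X(t,y)y \cdot(\text{one-signed factor}) + Y(t,x)x\cdot(\text{one-signed factor})\big)$. The key qualitative fact is that, thanks to the assumed bounds on $X$ and $Y$, the quantity $X(t,y)y$ and $Y(t,x)x$ are both strictly positive whenever $x,y\ne0$ and small, which makes $\vartheta'(t)>0$ on the relevant region: the path genuinely winds in one direction and cannot undo its rotation. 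Then a straightforward comparison argument bounds $\vartheta'(t)$ from below by the angular velocity of the autonomous system with coefficients $b_1$ (on the $X$-side) and $b_2$ (on the $Y$-side) — whichever gives the slower rotation — uniformly in $t$, as long as the orbit remains in the small annulus. Integrating over $I$ and invoking the confinement step, one obtains $\vartheta(t_1)-\vartheta(t_0)\ge$ (number of partial turns of the comparison orbit), and since that autonomous orbit is periodic with a finite period $T_j^*$ depending only on $b_1,b_2,\rho_j^*$, choosing $\tau_j^*$ a suitable multiple of that period yields $\vartheta(t_1)-\vartheta(t_0)>j\pi$.

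The main obstacle I expect is the interplay between the two smallness parameters: the angular estimate needs the orbit to stay in $(-\delta,\delta)\times(-\delta,\delta)$ for a \emph{long} time (to accumulate $j\pi$), yet the natural confinement radius shrinks as the required time grows, because the outer comparison system (coefficients $a_1,a_2$, giving the \emph{faster} rotation and the \emph{larger} orbits) controls how far the solution can drift outward. The delicate point is therefore to choose $\rho_j^*$ small enough that the outer comparison orbit through $\partial B_{\rho_j^*}$ stays well inside the strip, while simultaneously keeping the inner comparison orbit's period finite so that a fixed $\tau_j^*$ suffices. This is exactly the balancing already performed in \cite[Lemma 3.2]{BosZan13}; the only new bookkeeping here is checking that the present two-sided hypotheses \eqref{controllogh}, which are slightly more general (the roles of $a$ and $b$ are swapped between the two equations to respect the Hamiltonian sign structure), still produce the required one-signed angular velocity and the required closed level sets. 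Once that is verified, the rest is the routine comparison-and-integration argument sketched above.
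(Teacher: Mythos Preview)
Your overall strategy --- compare with autonomous Hamiltonian systems built from the $a_i,b_i$ and exploit that $\vartheta'>0$ --- points in the right direction, and in fact the angular velocity is simpler than your fragmentary formula suggests: with the convention \eqref{polari2} one has
\[
\vartheta'(t)=\frac{X(t,y)\,y+Y(t,x)\,x}{x^2+y^2},
\]
which is indeed strictly positive by the sign conditions. However, the confinement step, which you correctly flag as the crux, contains a real gap in your sketch.

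You propose to trap the orbit using the ``closed level curves'' of a single comparison energy (say $A_1(y)+A_2(x)$). The problem is that along a solution of the \emph{non-autonomous} system \eqref{sistema} this energy is \emph{not} monotone: a direct computation gives
\[
\frac{d}{dt}\big(A_1(y)+A_2(x)\big)=a_2(x)X(t,y)-a_1(y)Y(t,x),
\]
and the sign hypotheses only force this to be $\ge 0$ in the quadrants $xy\ge 0$ and $\le 0$ in the quadrants $xy\le 0$. So a single energy level set does not confine the orbit; over one full turn the orbit can drift outward (or inward) by a definite amount, and over many turns --- exactly the regime you need --- it could leave the strip $(-\delta,\delta)^2$ or approach the origin. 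Likewise, your ``bound $\vartheta'(t)$ from below by the angular velocity of the autonomous system'' is not a well-posed comparison: the two orbits are at different points of the plane, so their angular speeds cannot be compared pointwise in $t$.

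The paper's proof (following \cite[Lemma 3.2]{BosZan13}) resolves this by using \emph{two} energies, $\mathcal{E}_A=A_1(y)+A_2(x)$ and $\mathcal{E}_B=B_1(y)+B_2(x)$, whose monotonicities are opposite quadrant by quadrant. Gluing arcs of their level curves across the coordinate axes yields genuine \emph{spirals} $\rho_\pm(\vartheta)$ (solutions of $\frac{d\rho_\pm}{d\vartheta}=\rho_\pm\mathcal{M}_\pm$, with $\mathcal{M}_\pm$ switching between the $a$- and $b$-data according to the sign of $xy$) which bound the actual orbit's radius as a function of angle. This simultaneously gives non-vanishing, confinement inside $(-\delta,\delta)^2$, and the rotation estimate. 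The quadrant-alternating construction is the missing mechanism in your argument; once you have it, the rest is indeed routine, and choosing $\rho_j^*$ small enough so that the outer spiral stays in the strip while the inner spiral's ``period'' (in $\vartheta$) is finite gives $\tau_j^*$.
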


\begin{proof}[Sketch of the proof]
We are going to give a sketch of the proof, following the arguments in \cite[Lemma 3.2]{BosZan13}.
The crucial point is to construct two spiraling planar curves controlling (from below and from above) the rotations of the planar path $(x(t),y(t))$; this can be done be gluing together pieces of level curves of suitable energy functions.
Precisely, after having extended $a_i$ and $b_i$ to the whole real line in such a way that \eqref{controllogh} still holds and that the primitives
$$
A_i(s) = \int_0^s a_i(\xi)\,d\xi, \qquad B_i(s) = \int_0^s b_i(\xi)\,d\xi 
$$ 
are coercive, one defines the energies
$$
\mathcal{E}_A(x,y) = A_1(y) + A_2(x), \qquad \mathcal{E}_B(x,y) = B_1(y) + B_2(x).
$$
Straightforward computations show that, along a solution $(x(t),y(t))$ of \eqref{sistema}, it holds 
$$
\frac{d}{dt}\mathcal{E}_A(x(t),y(t)) \geq 0, \; \mbox{ if } x(t)y(t) \geq 0, \qquad
\frac{d}{dt}\mathcal{E}_A(x(t),y(t)) \leq 0, \; \mbox{ if } x(t)y(t) \leq 0
$$
and
$$
\frac{d}{dt}\mathcal{E}_B(x(t),y(t)) \geq 0, \; \mbox{ if } x(t)y(t) \leq 0, \qquad
\frac{d}{dt}\mathcal{E}_B(x(t),y(t)) \leq 0, \; \mbox{ if } x(t)y(t) \geq 0.
$$
Therefore, the aforementioned parameterized spirals 
$$
\mathbb{R} \ni \vartheta \mapsto (\rho_{\pm}(\vartheta)\cos \vartheta,- \rho_{\pm}(\vartheta)\sin\vartheta) \in \mathbb{R}^2 \setminus \{(0,0)\}
$$ 
can be obtained solving the differential equations
$$
\frac{d\rho_{\pm}}{d\vartheta} = \rho_{\pm} \mathcal{M}_{\pm} (\rho_{\pm}(\vartheta)\cos \vartheta,- \rho_{\pm}(\vartheta)\sin\vartheta),
$$
where
$$
\mathcal{M}_-(x,y) = 
\begin{cases}
\displaystyle \frac{b_1(y)x - b_2(x)y}{b_2(x)x+b_1(y)y}, & \textrm{if } \, xy \geq 0 \vspace{0.2cm}\\
\displaystyle \frac{a_1(y)x - a_2(x)y}{a_2(x)x+a_1(y)y}, & \textrm{if } \, xy \leq 0
\end{cases}
$$
and
$$
\mathcal{M}_+(x,y) =
\begin{cases}
\displaystyle \frac{b_1(y)x - b_2(x)y}{b_2(x)x+b_1(y)y}, & \textrm{if } \, xy \leq 0 \vspace{0.2cm}\\
\displaystyle \frac{a_1(y)x - a_2(x)y}{a_2(x)x+a_1(y)y}, & \textrm{if } \, xy \geq 0.
\end{cases}
$$
The remaining part of the proof follows exactly as in \cite[Lemma 3.2]{BosZan13}. 
\end{proof}

\section{Statement and proof of the main result}\label{sez3}

In this section, we state and prove our main result concerning the problem 
\begin{equation}\label{equazione}
\left\{
\begin{array}{ll}
\displaystyle \textnormal{div} \, \left( \frac{\nabla u}{\sqrt{1-\vert \nabla u \vert^2}}\right) + \lambda q(\vert x \vert) g(u) = 0, & \quad x \in \mathcal{B}_R, \vspace{0.2 cm} \\
u = 0, & \quad x \in \partial \mathcal{B}_R,
\end{array}
\right.
\end{equation}
where $\mathcal{B}_R \subset \mathbb{R}^N$ is the ball of radius $R$, for $N \geq 2$, and $\lambda > 0$.
\begin{theorem}\label{mainthm2}
Let $q: [0, R] \to \mathbb{R}$ be a continuous function such that $q(r_0) > 0$ for some $r_0 \in [0, R]$ and let $g: \mathbb{R} \to \mathbb{R}$ be a locally Lipschitz continuous function satisfying 
\begin{itemize}
\item[$(g_0)$] there exists $\delta > 0$ such that 
$$
g(u) u > 0 \quad \textrm{ for every } u \in (-\delta, \delta) \setminus \{0\}
$$
and 
$$
\lim_{u \to 0} \frac{g(u)}{u} = 0. 
$$
\end{itemize}
Then, for any integer $k \geq 1$, there exists $\lambda^*(k) > 0$ such that, for every $\lambda > \lambda^*(k)$, problem \eqref{equazione} has at least $4k$ sign-changing radial solutions. More precisely, for every integer $j=1, \ldots k$ there exist four radial solutions $u_{l, j}^-$, $u_{s, j}^-$, $u_{s, j}^+$, $u_{l, j}^+$ of \eqref{equazione} satisfying 
$$
u_{l, j}^-(0) < u_{s, j}^-(0) < 0 < u_{s, j}^+(0) < u_{l, j}^+(0) 
$$
and having exactly $j+1$ nodal domains. 
\end{theorem}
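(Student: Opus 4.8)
The plan is to run the shooting scheme of the Introduction, in the spirit of \cite{BosZan13} but using the a priori bound $|u'|<1$ intrinsic to the Minkowski operator in place of the sublinearity-at-infinity hypothesis needed there. Radial solutions of \eqref{equazione} are exactly the solutions of the singular problem $(r^{N-1}\varphi(u'))'+\lambda r^{N-1}q(r)g(u)=0$, $u'(0)=0$, $u(R)=0$, with $\varphi(s)=s/\sqrt{1-s^{2}}$ and $\varphi^{-1}(s)=\psi(s)=s/\sqrt{1+s^{2}}$. For $\eta\in\mathbb{R}$ I would study the Cauchy problem $u'(0)=0$, $u(0)=\eta$, in the integral form $u(r)=\eta+\int_{0}^{r}\psi\!\big(-\lambda t^{1-N}\!\int_{0}^{t}s^{N-1}q(s)g(u(s))\,ds\big)\,dt$; a standard contraction/Gronwall argument (using that $\psi$ is a bounded homeomorphism and $g$ is locally Lipschitz) gives a unique solution $u(\cdot;\eta)$ for every $\eta$, defined on all of $[0,R]$ since $|u'|<1$, depending continuously on $\eta$ in $C^{1}([0,R])$, with $u(\cdot;0)\equiv0$. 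In the phase plane $(x,y)=(u,r^{N-1}\varphi(u'))$ one obtains the system $x'=\psi(r^{1-N}y)$, $y'=-\lambda r^{N-1}q(r)g(x)$; since the origin cannot be reached (it would force $u\equiv0$), the clockwise angle $\vartheta(r;\eta)$ is well defined (with $\vartheta(0;\eta)=0$ for $\eta>0$, $=\pi$ for $\eta<0$), and I set $\phi(\eta):=\tfrac{2}{\pi}(\vartheta(R;\eta)-\vartheta(0;\eta))$, continuous in $\eta\neq0$.

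The key fact is $x'y=y\,\psi(r^{1-N}y)>0$ whenever $y\neq0$: the path crosses the $y$-axis only clockwise, so, whenever $u(\cdot;\eta)$ vanishes at $R$, the solution has exactly $j+1$ nodal domains if and only if $\phi(\eta)=2j+1$ (oscillations of $\vartheta$ across the $x$-axis, possible where $q<0$, do not affect this count). The theorem will follow from a ``double-gap'' for $\phi$. At the ends of the parameter range there is no rotation: (i) if $|\eta|>R$ then $|u(r)|\ge|\eta|-r>0$ on $[0,R]$, so $u$ has constant sign, the path stays in a half-plane and $\phi(\eta)\in(-1,1)$; (ii) as $\eta\to0^{+}$, exploiting $(g_{0})$ — which gives $g(0)=0$ and $\sup_{|s|\le\sigma}|g(s)|=o(\sigma)$ — a bootstrap on the integral equation shows $0<u(r;\eta)<2\eta$ on $[0,R]$ for $\eta$ small (depending on $\lambda$), so again $\phi(\eta)\in(-1,1)$; similarly as $\eta\to0^{-}$.

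For intermediate $\eta$ I would invoke Proposition \ref{rotazioni}. Choose $[\alpha,\beta]\subset[0,R]$ with $r_{0}\in[\alpha,\beta]$, $m:=\min_{[\alpha,\beta]}r^{N-1}q(r)>0$ and $M:=\max_{[\alpha,\beta]}r^{N-1}q(r)$. On $[\alpha,\beta]$ the monotonicity of $\psi$ and of $r\mapsto r^{1-N}$ gives, for all $y$, $\psi(\beta^{1-N}y)\,y\le x'y\le\psi(\alpha^{1-N}y)\,y$, with $y\mapsto\psi(\beta^{1-N}y)$ and $y\mapsto\psi(\alpha^{1-N}y)$ locally Lipschitz and with coercive primitives; and for $|x|<\delta$, $\lambda m\,xg(x)\le(-x)\,y'\le\lambda M\,xg(x)$, where $xg(x)>0$ by $(g_{0})$ and, crucially, $g(x)/x\to0$ as $x\to0$ — the sublinearity ``at the $y$-axis'' that keeps the comparison spirals of Proposition \ref{rotazioni} regular there. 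Feeding these bounds (with the $x$-coefficients scaled by $\lambda$) into the comparison-spiral argument behind Proposition \ref{rotazioni} on the fixed interval $[\alpha,\beta]$ — note that only $|x|<\delta$ is at stake, since the $y$-bound for $x'$ holds for all $y$ because $\psi$ is bounded — one gets some $\rho^{\star}\in(0,\delta)$ such that, for every $\lambda$ large, any solution with phase radius $\rho^{\star}$ at $r=\alpha$ stays in $\{|x|<\delta\}$ on $[\alpha,\beta]$ and turns by more than $(2k+1)\pi$ there; the point is that for these $\lambda$-dependent data the interval-length threshold in Proposition \ref{rotazioni} tends to $0$ as $\lambda\to\infty$, so the fixed length $\beta-\alpha$ eventually suffices. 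Since $\eta\mapsto$ (phase radius at $r=\alpha$) is continuous, is $0$ at $\eta=0$ and is $\ge R-\alpha>\rho^{\star}$ at $\eta=R$, some $\bar\eta\in(0,R)$ realizes it; adding the contributions on $[0,\alpha]$ and $[\beta,R]$, which can only add clockwise $y$-axis crossings, gives $\phi(\bar\eta)>2k+1$.

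Fixing now $\lambda$ above the threshold $\lambda^{*}(k)$ thus obtained, $\phi$ is continuous on $(0,R]$ with $\phi<1$ near $0^{+}$, $\phi(R)\le1$, and $\phi(\bar\eta)>2k+1$ for some $\bar\eta\in(0,R)$; by the intermediate value theorem, for each $j\in\{1,\dots,k\}$ the level $2j+1$ is attained at some $\eta_{s,j}^{+}\in(0,\bar\eta)$ and some $\eta_{l,j}^{+}\in(\bar\eta,R)$, producing solutions $u_{s,j}^{+}=u(\cdot;\eta_{s,j}^{+})$, $u_{l,j}^{+}=u(\cdot;\eta_{l,j}^{+})$ with $j+1$ nodal domains and $0<u_{s,j}^{+}(0)<u_{l,j}^{+}(0)$. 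The same argument with $\eta<0$ yields $u_{l,j}^{-}$, $u_{s,j}^{-}$ with $u_{l,j}^{-}(0)<u_{s,j}^{-}(0)<0$, which gives the $4k$ solutions. The main obstacle I anticipate lies in this last use of Proposition \ref{rotazioni}: one must quantify the dependence on $\lambda$ of its interval-length threshold (so a fixed positivity interval of $q$ suffices for large $\lambda$) and, via the spiral trapping in its proof, ensure that the fast-oscillating solution does not leave $(-\delta,\delta)$ on $[\alpha,\beta]$ — which is precisely where the sublinearity of $g$ \emph{at the origin} (not at infinity) and the boundedness of $\psi$ come into play, and what lets the whole scheme work without any growth restriction on $g$.
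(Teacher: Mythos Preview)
Your proposal is correct and follows the same shooting/double-gap/IVT strategy as the paper. The execution differs in several technical respects worth noting. The paper truncates both the operator (replacing $\varphi$ by a globally defined $\tilde\varphi$, linear outside a $\lambda$-dependent range) and the nonlinearity (setting $\tilde f\equiv 0$ for $|u|\ge R+1$), proves an equivalence lemma (Lemma~\ref{equivalenza}), and then obtains non-rotation for large $\eta$ trivially from $u(\cdot;R+1)\equiv R+1$; your direct use of $|u'|<1$ (giving $|u(r)|\ge|\eta|-r>0$ for $|\eta|>R$) avoids any truncation and is cleaner. For small $\eta$ you bootstrap $|u-\eta|<\eta$ from the integral equation, while the paper runs a quarter-turn contradiction argument in the phase plane (Lemma~\ref{piccole}); both rest on $g(u)/u\to0$. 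For the rotation step (Lemma~\ref{cruciale}) the paper rescales time via $t=\sqrt\lambda\,r$, so that the positivity interval of $q$ becomes $[\sqrt\lambda\,r_0^-,\sqrt\lambda\,r_0^+]$ and Proposition~\ref{rotazioni} is applied on a long interval; you instead keep the interval fixed and argue that the threshold $\tau_j^*$ in Proposition~\ref{rotazioni} tends to $0$ as $\lambda\to\infty$. This is true, but it requires going into the proof of the proposition (computing how the period of the comparison Hamiltonian scales with $\lambda$) rather than invoking it as a black box. The paper's rescaling buys a more direct appeal to Proposition~\ref{rotazioni}; your route buys a tidier global framework (no truncations, no equivalence lemma) at the cost of tracking the one quantitative dependence you rightly flag as the main obstacle.
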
 
Theorem \ref{mainthm} in the Introduction is a direct consequence of this statement for $g(u)=\vert u \vert^{p-1} u$, which satisfies assumption $(g_0)$ as long as $p > 1$. Notice however that in Theorem \ref{mainthm2} only a \emph{local} condition on $g(u)$ at zero is required. We also observe that the solutions found are distinguished through their nodal properties; precisely, for any $j=1, \ldots, k$, we find four solutions, two of them with positive value in the center of the ball (a small one $u_{s, j}^+$ and a large one $u_{l, j}^+$) and two with negative value therein (a small one $u_{s, j}^-$ and a large one $u_{l, j}^-$); see Section \ref{sez3b} for a more accurate discussion.

\subsection{Proof of Theorem \ref{mainthm2}}
Setting
$$
\varphi(s) = \frac{s}{\sqrt{1-s^2}}, \qquad s \in (-1,1),
$$
and writing $u(r)=u(\vert x \vert)$ with the usual abuse of notation, we convert the radial problem associated with \eqref{equazione} into 
\begin{equation}\label{ode}
	\left\{
	\begin{array}{l}
		\displaystyle (r^{N-1} \varphi(u'))' + \lambda r^{N-1} q(r) g(u) = 0 \vspace{0.2 cm}\\
		u'(0)=0, \; u(R)=0. 
	\end{array}
	\right.
\end{equation}
\begin{remark}\label{regolarita}
\textnormal{Let us recall that a solution of \eqref{ode} is meant as a function $u \in C^1([0, R])$ such that 
$\vert u'(r) \vert < 1$ for every $r \in [0,R]$, 
$r^{N-1} \varphi(u') \in C^1([0, R])$ and the differential equation in \eqref{ode} is satisfied pointwise, together with the boundary conditions. Actually, by an easy regularity argument (see \cite[Remark 3.3]{CoeCorRiv14}) one can see that $\varphi(u') \in C^1([0,R])$, 
finally implying $u \in C^2([0, R])$.}   
\end{remark}
As a first step, we are going to introduce an equivalent formulation of this problem, obtained from a suitable modification of both the differential operator and the nonlinear term (cf. \cite[Proposition 2.1]{CoeCorRiv14}). Precisely, on one hand we choose a locally Lipschitz continuous function $\tilde{f}: [0, R] \times \mathbb{R} \to \mathbb{R}$ such that
$$
\tilde{f}(r, u) = 
\left\{
\begin{array}{ll}
q(r) g(u) & \textrm{if } \vert u \vert \leq R \vspace{0.1cm}\\
0 & \textrm{if } \vert u \vert \geq R+1,
\end{array}
\right.
$$
and we set $M=\sup_{r \in [0, R], u \in \mathbb{R}} \vert \tilde{f}(r, u) \vert$. On the other hand, 
we set $\gamma=\varphi^{-1}(\lambda MR) \in (0, 1)$ and define the $C^1$-function $\tilde{\varphi}: \mathbb{R} \to \mathbb{R}$ as 
$$
\tilde{\varphi}(x)=
\left\{
\begin{array}{ll}
\varphi(x) & \textrm{if } \vert x \vert \leq \gamma \vspace{0.1cm}\\
\varphi'(\gamma) (x+\gamma) - \varphi(\gamma) & \textrm{if } x < -\gamma \vspace{0.1cm}\\
\varphi'(\gamma) (x-\gamma) + \varphi(\gamma) & \textrm{if } x > \gamma.
\end{array}
\right.
$$
%\begin{equation}\label{stima1}
%s^2 \leq \tilde{\varphi}(s) s \leq \varphi'(\gamma) s^2;
%\end{equation}
%moreover, defining 
We then introduce the modified problem
\begin{equation}\label{modificato}
	\left\{
	\begin{array}{l}
		\displaystyle (r^{N-1} \tilde{\varphi}(u'))' + \lambda r^{N-1} \tilde{f}(r, u) = 0 \vspace{0.2 cm}\\
		u'(0)=0, \; u(R)=0, 
	\end{array}
	\right.
\end{equation}
meaning its solutions as in Remark \ref{regolarita} (up to the requirement $\vert u'(r) \vert < 1$),
and state the following lemma.
\begin{lemma}\label{equivalenza}
Let $u \in C^1([0, R])$; then, $u(r)$ is a solution of \eqref{ode} if and only if $u(r)$ is a solution of \eqref{modificato}. 
\end{lemma}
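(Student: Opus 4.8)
The plan is to establish the equivalence through an a priori bound argument: any solution of the modified problem \eqref{modificato} automatically satisfies $\|u\|_\infty \leq R$ and $|u'|_\infty \leq \gamma$, so that on the relevant range both modifications are inactive ($\tilde f(r,u) = q(r)g(u)$ and $\tilde\varphi(u') = \varphi(u')$), hence $u$ solves \eqref{ode}; the converse is even easier, since a solution of \eqref{ode} is $C^1$ with $|u'| < 1$ everywhere, and one only needs to check it stays in the unmodified regime.

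First I would prove the derivative bound. Integrating the differential equation in \eqref{modificato} from $0$ to $r$ and using $u'(0) = 0$ gives
$$
r^{N-1}\tilde\varphi(u'(r)) = -\lambda \int_0^r s^{N-1}\tilde f(s,u(s))\,ds,
$$
so $|\tilde\varphi(u'(r))| \leq \lambda M r^{N-1}/r^{N-1}\cdot\frac{1}{N} \cdot N \le \lambda M r \le \lambda M R = \varphi(\gamma)$ (being careful with the elementary estimate $\int_0^r s^{N-1}\,ds = r^N/N$ against the factor $r^{N-1}$, which yields $r/N \le R$). Actually the cleaner bound is $|\tilde\varphi(u'(r))| \le \lambda M r / N \le \lambda M R = \varphi(\gamma) = \tilde\varphi(\gamma)$. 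Since $\tilde\varphi$ is a strictly increasing homeomorphism of $\mathbb{R}$ with $\tilde\varphi(\gamma) = \varphi(\gamma)$, this forces $|u'(r)| \le \gamma < 1$ for all $r$; in particular $\tilde\varphi(u'(r)) = \varphi(u'(r))$ throughout, so the curvature modification never switches on.

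Next I would bound $u$ itself. From $u(R) = 0$ and $|u'| \le \gamma$ on $[0,R]$ we get $\|u\|_\infty \le \gamma R < R$ — in fact any crude bound giving $\|u\|_\infty \le R$ suffices — so $\tilde f(r,u(r)) = q(r)g(u(r))$ for all $r$, and the nonlinearity modification is likewise inactive. Hence $u$ satisfies the differential equation in \eqref{ode}; the boundary conditions and the regularity ($r^{N-1}\varphi(u') \in C^1$, $|u'| < 1$) are inherited directly. For the reverse implication, let $u$ solve \eqref{ode}; then $|u'(r)| < 1$ for every $r \in [0,R]$ by definition, and the same integration shows $|\varphi(u'(r))| \le \lambda M r/N \le \varphi(\gamma)$, whence $|u'| \le \gamma$ and $\|u\|_\infty \le R$ again by integrating from $R$; thus $\tilde\varphi(u') = \varphi(u')$ and $\tilde f(r,u) = q(r)g(u)$ everywhere, and $u$ solves \eqref{modificato}.

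The only genuinely delicate point — and the one I would be most careful about — is the bookkeeping that pins the derivative bound $\lambda M R$ exactly to the value $\gamma$ at which $\tilde\varphi$ was truncated: one needs $|\tilde\varphi(u'(r))| \le \varphi(\gamma)$ with the \emph{same} constant used in the definition of $\gamma = \varphi^{-1}(\lambda M R)$, which is why $\tilde\varphi$ is defined to be affine (hence globally invertible) outside $[-\gamma,\gamma]$ rather than merely bounded — the inversion $|u'| \le \gamma$ relies on $\tilde\varphi$ being a homeomorphism of all of $\mathbb{R}$, so that no solution of \eqref{modificato} can "escape" into a region where $\tilde\varphi(u')$ exceeds $\varphi(\gamma)$. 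Everything else is routine integration and monotonicity.
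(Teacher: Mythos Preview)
Your proof is correct and follows essentially the same route as the paper: integrate the equation using $u'(0)=0$ to obtain $|\tilde\varphi(u'(r))| \le \lambda M r/N \le \lambda M R = \tilde\varphi(\gamma)$, deduce $|u'|\le\gamma$, then use $u(R)=0$ to get $\|u\|_\infty \le \gamma R < R$, so both truncations are inactive. The paper presents exactly this argument (with the cruder bound $\lambda M R$ written directly) and omits the converse; your treatment of the converse has the ingredients in slightly the wrong order—one must first use $|u'|<1$ and $u(R)=0$ to obtain $\|u\|_\infty \le R$ \emph{before} invoking the bound $|q(r)g(u)| \le M$ in the integration—but this is a matter of exposition rather than a genuine gap.
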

\begin{proof}
Let $u(r)$ be a solution of \eqref{modificato}; integrating the equation and using $u'(0) = 0$, we obtain
$$
r^{N-1} \tilde\varphi(u'(r)) = - \lambda \int_0^r s^{N-1} \tilde f(s,u(s))\,ds, \quad \textrm{ for every } r \in [0,R],
$$ 
implying
\begin{equation}\label{u'}
\vert u'(r) \vert \leq \tilde\varphi^{-1} \left( \lambda M R \right) = \varphi^{-1} \left( \lambda M R \right) = \gamma < 1. 
\end{equation}
Hence, on one hand $\tilde\varphi(u'(r)) = \varphi(u'(r))$ for every $r \in [0,R]$; on the other hand,
using $u(R) = 0$ we have that 
\begin{equation}\label{u}
\vert u(r) \vert \leq \int_0^R \vert u'(s) \vert \, ds \leq \gamma R < R, \quad \textrm{ for every } r \in [0,R], 
\end{equation}
so that $\tilde f(r,u(r)) = q(r)g(u(r))$, as well. Summing up, $u(r)$ solves \eqref{ode}.

The converse (which will not be used for our purposes) follows from similar arguments and we omit the proof.
\end{proof}
According to Lemma \ref{equivalenza}, from now on we deal with problem \eqref{modificato}. Having in mind the shooting approach presented in the Introduction, we first state a global existence and continuous dependence result for the associated Cauchy problems.
\begin{lemma}\label{unicita}
For any $\eta \in \mathbb{R}$, the Cauchy problem 
\begin{equation}\label{PC}
	\left\{
	\begin{array}{l}
		\displaystyle (r^{N-1} \tilde{\varphi}(u'))' + \lambda r^{N-1} \tilde{f}(r, u) = 0 \vspace{0.2 cm}\\
		u'(0)=0, \; u(0)=\eta, 
	\end{array}
	\right.
\end{equation}
has a unique solution $u(r; \eta)$ defined on $[0, R]$. Moreover, if $\eta_k \to \bar{\eta}$, then 
\begin{equation}\label{convcon}
u(r; \eta_k) \to u(r; \bar{\eta}), \qquad r^{N-1} \tilde\varphi(u'(r; \eta_k)) \to r^{N-1}\tilde\varphi(u'(r; \bar{\eta})),
\end{equation}
uniformly on $[0, R]$.
\end{lemma}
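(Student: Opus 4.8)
The plan is to turn the singular Cauchy problem \eqref{PC} into a perfectly regular fixed point equation. Integrating the differential equation on $[0,r]$ and using $u'(0)=0$, so that $r^{N-1}\tilde\varphi(u')$ vanishes at the origin, one checks that a function $u \in C^1([0,R])$ solves \eqref{PC} if and only if it satisfies
$$
u(r) = \eta + \int_0^{r} \tilde\varphi^{-1}\!\left( -\frac{\lambda}{t^{N-1}} \int_0^{t} s^{N-1}\, \tilde f(s, u(s)) \, ds \right) dt , \qquad r \in [0,R] .
$$
The observation that neutralizes the singularity is that, since $\vert \tilde f \vert \leq M$, one has $\vert t^{1-N}\int_0^t s^{N-1}\tilde f(s,u(s))\,ds \vert \leq M t / N$: the argument of $\tilde\varphi^{-1}$ therefore stays in the fixed compact interval $[-\lambda MR, \lambda MR]$ — on which, by the very definition of $\gamma$, $\tilde\varphi^{-1}$ coincides with $\varphi^{-1}$ — and is $O(t)$ as $t\to0^+$, so that the whole integrand is continuous on $[0,R]$ and vanishes at $t=0$. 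Writing $\mathcal T$ for the operator defined by the right-hand side above, this already shows that $\mathcal T u \in C^1([0,R])$ with $(\mathcal T u)'(0)=0$, so that fixed points of $\mathcal T$ are exactly the solutions of \eqref{PC}.

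For existence and uniqueness I would first record that every fixed point $u$ of $\mathcal T$ automatically satisfies the a priori bounds $\vert u'(r)\vert \leq \gamma < 1$ and $\vert u(r)-\eta \vert \leq \gamma R$ on $[0,R]$, so that the relevant values of $u$ lie in a compact set depending only on a bound for $\vert \eta \vert$, on which $\tilde f$ is Lipschitz in its second variable, say with constant $L$. Since moreover $\tilde\varphi' \geq \varphi'(0) = 1$ everywhere, $\tilde\varphi^{-1}$ is globally $1$-Lipschitz; combining this with the Lipschitz bound on $\tilde f$ and the elementary estimate $t^{1-N}\int_0^t s^{N-1}\vert w(s)\vert\,ds \leq (t/N)\max_{[0,t]}\vert w\vert$ yields, for $u_1,u_2$ with $u_1(0)=u_2(0)$,
$$
\vert \mathcal T u_1(r) - \mathcal T u_2(r) \vert \leq \frac{\lambda L}{N} \int_0^{r} t\, \max_{[0,t]}\vert u_1 - u_2 \vert\, dt , \qquad r \in [0,R] .
$$
Working on the closed convex set $K=\{u \in C([0,R]) : \vert u(r)-\eta \vert \leq \gamma R\}$ endowed with the weighted norm $\Vert u \Vert_{\ast} = \max_{[0,R]} e^{-\kappa r^2}\vert u(r)\vert$, and choosing $\kappa$ large enough, the displayed inequality shows that $\mathcal T$ maps $K$ into itself (by the a priori bound) and is a contraction there; Banach's theorem then gives a unique fixed point of $\mathcal T$ in $K$, hence — as every solution of \eqref{PC} lies in $K$ — a unique solution $u(\cdot\,;\eta)$ of \eqref{PC} on the whole of $[0,R]$. (One could equally separate the two claims, deducing existence from Schauder's theorem, since $\mathcal T(C([0,R]))$ consists of equi-Lipschitz functions and is thus relatively compact, and uniqueness from a Gronwall argument applied to the same inequality.)

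For the continuous dependence \eqref{convcon}, given $\eta_k \to \bar\eta$ I would set $w_k = u(\cdot\,;\eta_k) - u(\cdot\,;\bar\eta)$ and run the inequality above — now with $L$ a Lipschitz constant for $\tilde f$ valid on a compact set containing the ranges of all the $u(\cdot\,;\eta_k)$, which exists because $\{\eta_k\}$ is bounded — to obtain
$$
\vert w_k(r) \vert \leq \vert \eta_k - \bar\eta \vert + \frac{\lambda L}{N} \int_0^{r} t\, \max_{[0,t]}\vert w_k\vert\, dt .
$$
Gronwall's inequality then gives $\max_{[0,R]}\vert w_k\vert \leq \vert \eta_k - \bar\eta \vert\, \exp(\lambda L R^2/(2N)) \to 0$, which is the first convergence in \eqref{convcon}; the second one follows at once from the identity $r^{N-1}\tilde\varphi(u'(r;\eta)) = -\lambda\int_0^{r} s^{N-1}\tilde f(s,u(s;\eta))\,ds$, letting $k\to\infty$ inside the integral and using the uniform convergence just obtained together with the continuity of $\tilde f$. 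The only genuine obstacle is the singular weight $r^{N-1}$ at $r=0$, which forbids a naive application of the Cauchy--Lipschitz theorem; it is disposed of once and for all in the first step through the bound $t^{1-N}\int_0^t s^{N-1}\tilde f(s,u(s))\,ds = O(t)$, after which everything reduces to a standard fixed point argument together with Gronwall's lemma. All the other ingredients — boundedness of $\tilde f$, global Lipschitzianity of $\tilde\varphi^{-1}$, and the a priori bounds on $u$ and $u'$ — are precisely the features that the modified problem \eqref{modificato} was built to guarantee.
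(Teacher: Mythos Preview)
Your argument is correct and follows the same overall strategy as the paper: recast \eqref{PC} as a fixed-point equation for the integral operator, show the operator is a contraction on $C([0,R])$, and read off continuous dependence. The only technical difference is in how the contraction is obtained: the paper shows by induction that $\vert T^k u_1(r) - T^k u_2(r)\vert \leq (Lr^2)^k/k!\,\Vert u_1-u_2\Vert$, so that some iterate of $T$ is a contraction in the usual sup-norm, whereas you use a Bielecki-type weighted norm $\Vert u\Vert_* = \max e^{-\kappa r^2}\vert u(r)\vert$ to make $\mathcal T$ itself a contraction. These two devices are well known to be interchangeable. A minor simplification: since $\tilde f$ vanishes for $\vert u\vert \geq R+1$ and is locally Lipschitz, it is in fact globally Lipschitz in $u$, so there is no need to restrict to the invariant set $K$ or to track compact sets for the continuous-dependence step; the paper simply uses the global constant $\textnormal{Lip}(\tilde f)$ throughout. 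Your explicit Gronwall computation for \eqref{convcon} is a bit more detailed than the paper's, which just says the first convergence is ``a direct consequence of the above argument'' (i.e., the fixed point of a parameter-dependent contraction varies continuously with the parameter).
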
 

\begin{proof}
We first observe that $u(r;\eta)$ is a solution of \eqref{PC}, defined on $[0,R]$, if and only if it is a fixed point of the operator $T: C([0,R]) \to C([0,R])$ defined by
$$
Tu(r) = \eta - \int_0^r \tilde\varphi^{-1} \left( \frac{1}{s^{N-1}}\int_0^s \lambda \tau^{N-1}\tilde f(\tau,u(\tau))\,d\tau\right)\,ds;
$$ 
incidentally, we explicitly notice that $T$ is well-defined, despite the presence of the singular term $1/s^{N-1}$. 
We are going to establish existence and uniqueness of a fixed point by proving that a suitable iterate of $T$ is a contraction with respect to the standard sup-norm $\Vert u \Vert = \sup_{r \in [0,R]} \vert u(r) \vert$. 
Precisely, denoting by $\textnormal{Lip}(\tilde\varphi^{-1})$ and 
$\textnormal{Lip}(\tilde f)$ the Lipschitz constants of $\tilde\varphi^{-1}$ and $\tilde f$ respectively, and setting
$$
L = \frac{\lambda}{N}\, \textnormal{Lip}(\tilde\varphi^{-1}) \textnormal{Lip}(\tilde f),
$$
we show by induction that, for every $k \in \mathbb{N}$ and for every $u_1$, $u_2 \in C([0,R])$, it holds
\begin{equation}\label{indu}
\vert T^k u_1(r) - T^ku_2(r) \vert \leq \frac{L^k r^{2k}}{k!} \Vert u_1 - u_2 \Vert, \quad \mbox{ for every }
r \in [0,R],
\end{equation}
from which the thesis easily follows for $k$ large enough.
  
For $k = 0$, the estimate \eqref{indu} is trivial. Assuming it for $k \geq 0$, we have
\begin{align*}
\vert & T^{k+1} u_1(r) - T^{k+1}u_2(r) \vert \leq 
\int_0^r \left\vert 
\tilde\varphi^{-1}\left( \frac{1}{s^{N-1}}\int_0^s \lambda \tau^{N-1}\tilde f(\tau,T^k u_1(\tau))\,d\tau \right)\right. + \\
& \qquad \qquad  - \left.\tilde\varphi^{-1}\left( \frac{1}{s^{N-1}}\int_0^s \lambda \tau^{N-1}\tilde f(\tau,T^k u_2(\tau))\,d\tau \right)\right\vert \,ds \\
& \leq \lambda \textnormal{Lip}(\tilde\varphi^{-1}) \textnormal{Lip}(\tilde f)
\int_0^r \frac{1}{s^{N-1}}\int_0^s \tau^{N-1}\vert T^k u_1(\tau) - T^k u_2(\tau) \vert\,d\tau \,ds \\
& \leq \frac{\lambda^{k+1} \textnormal{Lip}(\tilde\varphi^{-1})^{k+1} \textnormal{Lip}(\tilde f)^{k+1}}{N^k k!}
\left(\int_0^r \frac{1}{s^{N-1}}\int_0^s \tau^{N+2k-1}\,d\tau \,ds \right) \Vert u_1 - u_2 \Vert \\
& = \frac{\lambda^{k+1} \textnormal{Lip}(\tilde\varphi^{-1})^{k+1} \textnormal{Lip}(\tilde f)^{k+1} r^{2k+2}}{N^k (N+2k) k! (2k+2)} \Vert u_1 - u_2 \Vert \leq \frac{L^{k+1} r^{2(k+1)}}{(k+1)!} \Vert u_1 - u_2 \Vert,
\end{align*}
thus implying that \eqref{indu} holds true.

The first convergence in \eqref{convcon} is a direct consequence of the above argument, while the second one now follows directly by integrating the differential equation in \eqref{PC}.
\end{proof}

Henceforth, we set 
$$
v(r; \eta) = r^{N-1} \tilde{\varphi}(u'(r; \eta)) 
$$
and we pass to (clockwise) polar-like coordinates, by writing 
$$
u(r; \eta)= \rho(r; \eta) \cos \theta(r; \eta), \quad v(r; \eta) = - \sqrt{\lambda}\rho(r; \eta) \sin \theta(r; \eta),
$$
where $\theta(0; \eta) = 0$ for $\eta > 0$ and $\theta(0; \eta) = \pi$ for $\eta < 0$.  
Notice that this change of variables is admissible for $\eta \neq 0$, since 
$$
u(r; \eta)^2 + v(r; \eta)^2 > 0 \quad \textrm{ for every } r \in [0, R], 
$$
this being a consequence of the uniqueness of the Cauchy problems (indeed, on $(0, R]$ the differential equation in \eqref{modificato} can be written as a first order planar system satisfying the assumption of Cauchy-Lipschitz theorem). 
Moreover, by standard results on path liftings, the continuity of the path $(u(r; \eta), v(r; \eta))$ with respect to $\eta$ ensures that $\theta(r; \eta)$ depends continuously on $\eta \neq 0$, as well. 

We also highlight the following property of the function $\theta(r; \eta)$, which will be used in the proofs. 
\begin{lemma}\label{primaprop}
For every $r_1, r_2$ such that $0 \leq r_1 \leq r_2 \leq R$, it holds that 
$$
\theta(r_2; \eta) - \theta(r_1; \eta) > -\pi. 
$$
\end{lemma}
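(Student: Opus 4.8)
The plan is to turn a local observation — the angular variable $\theta(\cdot;\eta)$ can cross the vertical axis $\{u=0\}$ only in the clockwise sense — into the claimed global bound by a continuity argument. Fix $\eta\neq0$; since $u(0;\eta)=\eta\neq0$, one has $u(r;\eta)^2+v(r;\eta)^2>0$ on all of $[0,R]$, so $\theta(\cdot;\eta)$ is continuous there, and it is in fact of class $C^1$ on $(0,R]$, where — by the regularity recalled in Remark~\ref{regolarita} — the solution of \eqref{modificato} solves a genuine first-order planar system. The case $r_1=r_2$ being trivial, I assume $0\le r_1<r_2\le R$.

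First I would compute the angular velocity. Writing $u'=\tilde\varphi^{-1}(v/r^{N-1})$ for $r>0$ (from $v=r^{N-1}\tilde\varphi(u')$) and $v'=-\lambda r^{N-1}\tilde f(r,u)$ (from the differential equation in \eqref{modificato}), a direct differentiation of the polar identities yields, for $r\in(0,R]$,
$$
\theta'(r;\eta)=\frac{v(r;\eta)\,\tilde\varphi^{-1}\!\big(v(r;\eta)/r^{N-1}\big)+\lambda r^{N-1}u(r;\eta)\,\tilde f(r,u(r;\eta))}{\sqrt{\lambda}\,\rho(r;\eta)^2}.
$$
The decisive point is the sign of $\theta'$ at the zeros of $u(\cdot;\eta)$: if $u(\bar r;\eta)=0$ with $\bar r\in(0,R]$, then $\cos\theta(\bar r;\eta)=0$, so $\theta(\bar r;\eta)=\pi/2+k\pi$ for some integer $k$ and $v(\bar r;\eta)=-\sqrt\lambda\,\rho(\bar r;\eta)\sin\theta(\bar r;\eta)\neq0$; since $\tilde\varphi^{-1}$ is odd and strictly increasing, the second term in the numerator vanishes and the first one is strictly positive, whence $\theta'(\bar r;\eta)>0$.

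From this I would deduce that, for every integer $k$, the equation $\theta(r;\eta)=\pi/2+k\pi$ has at most one solution on $[0,R]$: the continuous function $r\mapsto\theta(r;\eta)-(\pi/2+k\pi)$ vanishes exactly at the zeros of $u(\cdot;\eta)$, at each of which it is differentiable with positive derivative, and an elementary argument shows that such a function cannot have two zeros (between two consecutive ones it would keep a constant sign, forcing a non-positive derivative at one of them). The conclusion then follows by a dichotomy. If $\theta(\cdot;\eta)$ takes no value of the form $\pi/2+k\pi$ on $[r_1,r_2]$, then $\theta([r_1,r_2];\eta)$ lies in an open strip of width $\pi$, so $|\theta(r_2;\eta)-\theta(r_1;\eta)|<\pi$. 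Otherwise $\theta(\bar r;\eta)=\pi/2+k_0\pi$ for some $\bar r\in[r_1,r_2]$ and some integer $k_0$; by the uniqueness just proved together with $\theta'(\bar r;\eta)>0$, one gets $\theta(\cdot;\eta)<\pi/2+k_0\pi$ on $[0,\bar r)$ and $\theta(\cdot;\eta)>\pi/2+k_0\pi$ on $(\bar r,R]$, hence $\theta(r_1;\eta)\le\pi/2+k_0\pi\le\theta(r_2;\eta)$ and in particular $\theta(r_2;\eta)-\theta(r_1;\eta)\ge0$. In either case $\theta(r_2;\eta)-\theta(r_1;\eta)>-\pi$.

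I expect the only mildly delicate matters to be the regularity of $\theta(\cdot;\eta)$ — needed only on $(0,R]$, and in fact only at the zeros of $u(\cdot;\eta)$, which do not include $r=0$ — and the elementary lemma on functions with ``increasing'' zeros. The real heart of the argument is the sign of $\theta'$ at the zeros of $u$: there the nonlinear term $\tilde f$ drops out entirely and positivity comes solely from the monotonicity of $\tilde\varphi^{-1}$, which encodes the clockwise crossing of the line $\{u=0\}$.
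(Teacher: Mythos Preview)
Your proof is correct and follows the same approach as the paper: both compute $\theta'(r;\eta)$ and observe that it is nonnegative (in fact strictly positive) whenever $u(r;\eta)=0$, i.e., whenever $\theta=\pi/2+k\pi$, so that the vertical axis can only be crossed clockwise. The paper's proof is terser --- it simply states the formula for $\theta'$, notes the sign at $u=0$, and refers to \cite[Lemma~3.1]{BosZan13} for the passage from this local fact to the global bound --- whereas you spell out that passage via the ``at most one crossing of each level $\pi/2+k\pi$'' argument; the substance is the same.
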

\begin{proof}
The result follows easily from the fact that 
\begin{align*}
\theta'(r; \eta) & = \sqrt{\lambda}\,\frac{ u'(r; \eta) v(r; \eta)- v'(r; \eta) u(r; \eta)}{\lambda u(r; \eta)^2 + v(r; \eta)^2} \\
& = \sqrt{\lambda}\,\frac{ r^{N-1} \tilde{\varphi}(u'(r; \eta)) u'(r; \eta) - v'(r; \eta) u(r; \eta)}{\lambda u(r; \eta)^2 + v(r; \eta)^2} ,
\end{align*}
whence $\theta'(r; \eta) \geq 0$ if $\theta(r; \eta) = \pi/2 + k\pi$, namely if $u(r; \eta) = 0$ and $v(r; \eta) \neq 0$ (see also \cite[Lemma 3.1]{BosZan13}).   
\end{proof}
From now on, we consider the case when $\eta > 0$ and show how to find the $2k$ solutions $u_{s, j}^+, u_{l, j}^+$ for $j=1, \ldots, k$. 
A symmetric argument yields the other pairs of solutions. 
\smallbreak
According to the general strategy described in the Introduction, we first focus on the behavior of the small solutions; without loss of generality, we take the constant $\delta$ appearing in assumption $(g_0)$ strictly less than $R$.    
\begin{lemma}\label{piccole}
For every $\lambda > 0$, there exists $\eta_*(\lambda) \in (0, \delta)$ such that, for any $0 < \eta \leq \eta_*(\lambda)$, it holds that 
$$
\theta(r; \eta) < \frac{1}{2}\pi, \quad \textrm{ for every } r \in [0, R]. 
$$
\end{lemma}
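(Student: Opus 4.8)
\emph{Strategy.} The plan is to prove the stronger fact that, for $\eta>0$ small enough, $u(r;\eta)>0$ for every $r\in[0,R]$. This gives the statement at once: since $u(r;\eta)=\rho(r;\eta)\cos\theta(r;\eta)$ with $\rho(r;\eta)>0$, positivity of $u(\cdot;\eta)$ on $[0,R]$ forces $\cos\theta(r;\eta)>0$ there, and as $\theta(\cdot;\eta)$ is a continuous lift with $\theta(0;\eta)=0$ the angle is confined to $(-\pi/2,\pi/2)$, whence $\theta(r;\eta)<\pi/2$.

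\emph{An a priori estimate.} Integrating the equation in \eqref{modificato} and using $u'(0;\eta)=0$ gives $v(r;\eta)=-\lambda\int_0^r s^{N-1}\tilde f(s,u(s;\eta))\,ds$, so the singular factor is harmless, since $r^{1-N}\int_0^r s^{N-1}\,ds=r/N$. Hence, as long as $\sigma:=\sup_{[0,r]}|u(\cdot;\eta)|\le\delta$ (so that $\tilde f(\cdot,u)=q\,g(u)$ on $[0,r]$), one gets $|v(r;\eta)|/r^{N-1}\le(\lambda R/N)\big(\max_{[0,R]}|q|\big)M_\sigma$, where $M_\sigma:=\max_{|s|\le\sigma}|g(s)|$. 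Since $\tilde\varphi'(s)\ge1$ for every $s$ (indeed $\varphi'(s)=(1-s^2)^{-3/2}\ge1$), the map $\tilde\varphi^{-1}$ is $1$-Lipschitz and vanishes at $0$; therefore $|u'(r;\eta)|=|\tilde\varphi^{-1}(v(r;\eta)/r^{N-1})|\le|v(r;\eta)|/r^{N-1}$, and integrating once more, $|u(r;\eta)-\eta|\le(\lambda R^2/N)\big(\max_{[0,R]}|q|\big)M_\sigma$ throughout the interval under consideration.

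\emph{Using $(g_0)$ and closing by continuation.} By $(g_0)$ one has $g(0)=0$ and $g(u)/u\to0$ as $u\to0$, hence $M_\sigma/\sigma\to0$ as $\sigma\to0^+$; thus there exists $\sigma_\lambda\in(0,\delta)$ such that $(\lambda R^2/N)\big(\max_{[0,R]}|q|\big)M_\sigma\le\sigma/4$ for every $\sigma\in(0,\sigma_\lambda]$. Set $\eta_*(\lambda):=\sigma_\lambda/2$ and fix $\eta\in(0,\eta_*(\lambda)]$. Let $\bar r$ be the first $r\in[0,R]$ with $|u(r;\eta)-\eta|=\eta$, or $\bar r=R$ if no such $r$ exists. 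On $[0,\bar r]$ one has $0\le u(r;\eta)\le2\eta\le\sigma_\lambda<\delta$, so the a priori estimate applies with $\sigma=2\eta$ and yields $|u(r;\eta)-\eta|\le\eta/2$ on $[0,\bar r]$. In particular $|u(\bar r;\eta)-\eta|\le\eta/2<\eta$, which rules out the first alternative; hence $\bar r=R$, so $u(r;\eta)\ge\eta/2>0$ on $[0,R]$, as desired.

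\emph{Main difficulty.} The argument is a standard continuation bootstrap, and the only delicate point is that the a priori bound must \emph{strictly} improve the a priori assumption (the factor $1/4$ above), which is exactly where the sublinearity of $g$ at $0$ comes in. It is worth stressing that, since $q$ is allowed to change sign, $r\mapsto v(r;\eta)$ need not be monotone, so one cannot conclude by a sign/monotonicity consideration and must genuinely rely on this quantitative smallness.
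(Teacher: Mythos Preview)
Your proof is correct, and it takes a genuinely different route from the paper's argument. The paper first invokes continuous dependence (Lemma~\ref{unicita}) to ensure $|u(r;\eta)|\le\hat\eta(\lambda)$ for small $\eta$, then argues by contradiction: if the planar path reached the negative $v$-semiaxis, computing the angular variation in the rescaled coordinates $(\sqrt{\varepsilon}\,u,\tilde\varphi(u'))$ and using $\lambda q(r)g(u)u\le\varepsilon u^2$ shows that a quarter-turn would require time exceeding $R$. You instead prove the stronger statement $u(\cdot;\eta)>0$ on $[0,R]$ by a direct continuation/bootstrap, using only the integral representation, the $1$-Lipschitz property of $\tilde\varphi^{-1}$, and the smallness $M_\sigma/\sigma\to0$; no appeal to Lemma~\ref{unicita} or to an angular-speed computation is needed.

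What each approach buys: your argument is more elementary and makes the role of the sublinearity of $g$ at $0$ especially transparent (it is precisely what allows the a priori bound to \emph{strictly} improve the continuation hypothesis). The paper's rotation-count method, on the other hand, transports more directly to the variants in Section~\ref{sez4}: for instance in the annulus problem (Remark~\ref{anelli}) the shooting initializes with $u(R_1)=0$, so one cannot hope to prove positivity of $u$, whereas the ``quarter-turn takes too long'' mechanism still applies.
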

\begin{proof}
Let us fix $\lambda > 0$ and choose $\varepsilon > 0$ such that 
\begin{equation}\label{boundeps}
\sqrt{\varepsilon} < \frac{\pi}{2R}. 
\end{equation}
By assumption $(g_0)$, there exists $\hat{\eta}(\lambda) \in (0, \delta)$ such that 
\begin{equation}\label{assf}
\lambda \tilde{f}(r, u) u = \lambda q(r) g(u) u \leq \varepsilon u^2, \quad \textrm{ for every } r \in [0, R], \; \vert u \vert \leq \hat{\eta}(\lambda).  
\end{equation}
By Lemma \ref{unicita}, we finally find $\eta_*(\lambda) \in (0, \hat{\eta}(\lambda))$ such that, if $\eta \in [0, \eta_*(\lambda)]$, it holds 
\begin{equation}\label{controllou}
\vert u(r; \eta) \vert \leq \hat{\eta}(\lambda), \quad \textrm{ for every } r \in [0, R].
\end{equation}
With these positions, we are going to prove that the planar path $(u(r; \eta), v(r; \eta))$, for $r \in [0, R]$, cannot reach the negative $v$-semiaxis, which clearly implies the conclusion. 

By contradiction, suppose that this is not the case. Then, there exists $[r_1, r_2] \subset [0, R]$ such that the path $r \mapsto (u(r), v(r))=(u(r; \eta), v(r; \eta))$ satisfies 
\begin{equation}\label{quartogiro}
v(r_1)=0=u(r_2), \quad v(r) < 0 < u(r) \textrm{ for every } r \in (r_1, r_2). 
\end{equation}
Of course, also the path 
$$
r \mapsto \left(\sqrt{\varepsilon} u(r), \tilde{\varphi}(u'(r))=\tfrac{v(r)}{r^{N-1}}\right)
$$
satisfies \eqref{quartogiro}; as a consequence, passing to clockwise polar coordinates 
as in \eqref{polari2}, we find 
$$
\frac{1}{4} = \frac{\sqrt{\varepsilon}}{2\pi} \int_{r_1}^{r_2} \frac{u'(r) \tilde{\varphi}(u'(r))- u(r) (\tilde{\varphi}(u'(r)))'}{\varepsilon u(r)^2 + \tilde{\varphi}(u'(r))^2} \, dr, 
$$
that is, in view of the differential equation in \eqref{modificato}, 
$$
\frac{1}{4} = \frac{\sqrt{\varepsilon}}{2\pi} \int_{r_1}^{r_2} \frac{u'(r) \tilde{\varphi}(u'(r))  + \lambda \tilde{f}(r, u(r)) u(r) + \tfrac{N-1}{r} \tilde{\varphi}(u'(r)) u(r)}{\varepsilon u(r)^2 + \tilde{\varphi}(u'(r))^2} \, dr. 
$$
Recalling that $\tilde{\varphi}(u'(r)) u(r)  \leq 0$ for every $r \in [r_1, r_2]$ and using \eqref{assf}-\eqref{controllou}, together with the inequality $s \tilde{\varphi}(s) \leq \tilde{\varphi}(s)^2$, we finally obtain 
$$
\frac{1}{4} < \frac{\sqrt{\varepsilon}R}{2\pi}, 
$$
contradicting \eqref{boundeps}. 
\end{proof}
Second, we prove our key lemma; roughly speaking, we make larger solutions rotate as desired by enlarging the parameter $\lambda$.  
\begin{lemma}\label{cruciale}
For every integer $k \geq 1$, there exists $\lambda_k^* > 0$ such that for every $\lambda > \lambda_k^*$ there exists $\eta^*(\lambda) \in (0,R+1)$ such that
$$
\theta(R; \eta^*(\lambda)) > (k + 1)\pi.  
$$
\end{lemma}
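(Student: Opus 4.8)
\medskip
\noindent\textbf{Proof proposal.}
The mechanism is that, although the radial interval $[0,R]$ is fixed, \emph{enlarging $\lambda$ plays the role of enlarging the interval}: after a suitable rescaling the radial equation becomes a planar Hamiltonian system on an interval whose length is a positive power of $\lambda$, and Proposition~\ref{rotazioni} then produces arbitrarily many rotations. First I would localize: since $q$ is continuous and $q(r_0)>0$, there are $0<r_*<r^*\le R$ with $q\ge q_0>0$ on $[r_*,r^*]$; as in Theorem~\ref{mainthm2} we may take $\delta<R$. On $(0,R]$ the modified problem is, in the phase variables $(u,v)$ with $v=r^{N-1}\tilde\varphi(u')$,
\begin{equation*}
u'=\tilde\varphi^{-1}\!\Big(\frac{v}{r^{N-1}}\Big),\qquad v'=-\lambda r^{N-1}\tilde f(r,u).
\end{equation*}

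Next I would rescale. Set $\sigma=\lambda^{\alpha}(r-r_*)$, $\check u(\sigma)=\lambda^{\alpha}u(r)$, $\check v(\sigma)=v(r)$, with an exponent $\alpha>0$ adapted to the behaviour of $g$ near $0$ (for $g(u)=|u|^{p-1}u$ one takes $\alpha=1/(p+1)$). The system becomes
\begin{equation*}
\frac{d\check u}{d\sigma}=\tilde\varphi^{-1}\!\Big(\frac{\check v}{(r_*+\lambda^{-\alpha}\sigma)^{N-1}}\Big),\qquad
\frac{d\check v}{d\sigma}=-\lambda^{1-\alpha}(r_*+\lambda^{-\alpha}\sigma)^{N-1}\,\tilde f\big(r_*+\lambda^{-\alpha}\sigma,\ \lambda^{-\alpha}\check u\big),
\end{equation*}
on the interval $\sigma\in[0,\lambda^{\alpha}(r^*-r_*)]$, whose length tends to $+\infty$. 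For $\lambda$ large, on a \emph{fixed} neighbourhood of the origin one has $\tilde\varphi^{-1}=\varphi^{-1}$ and $\tilde f(r,u)=q(r)g(u)$; then, using $r^{N-1}\in[r_*^{N-1},(r^*)^{N-1}]$, $q\ge q_0>0$, the oddness and monotonicity of $\varphi^{-1}$, and $(g_0)$, both right-hand sides are pinched between $\lambda$-independent locally Lipschitz functions $a_i,b_i$ fulfilling~\eqref{controllogh} (it is precisely the correct choice of $\alpha$ that makes the $\check v$-equation pinched by \emph{fixed} functions). Applying Proposition~\ref{rotazioni} with a sufficiently large integer $j=j(k)$ gives $\tau_j^*>0$ and $\rho_j^*>0$, and $\lambda^{\alpha}(r^*-r_*)>\tau_j^*$ for $\lambda$ large.

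It then remains to pick the shooting datum. By continuous dependence (Lemma~\ref{unicita}) the map $\eta\mapsto \lambda^{2\alpha}u(r_*;\eta)^{2}+v(r_*;\eta)^{2}$ is continuous on $\mathbb{R}$; it vanishes at $\eta=0$ (since $g(0)=0$ forces, by uniqueness, $u(\cdot;0)\equiv0$) and equals $\lambda^{2\alpha}(R+1)^{2}$ at $\eta=R+1$ (since $\tilde f(\cdot,R+1)\equiv0$ forces, again by uniqueness, $u(\cdot;R+1)\equiv R+1$). Hence, for $\lambda$ large enough that $\rho_j^*<\lambda^{\alpha}(R+1)$, there is $\eta^*(\lambda)\in(0,R+1)$ with $\lambda^{2\alpha}u(r_*;\eta^*)^{2}+v(r_*;\eta^*)^{2}=(\rho_j^*)^{2}$, i.e.\ the rescaled solution lies on the circle of radius $\rho_j^*$ at $\sigma=0$. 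Proposition~\ref{rotazioni} then yields that this rescaled path does not meet the origin on $[0,\lambda^{\alpha}(r^*-r_*)]$ and that its angle increases by more than $j\pi$. Since the passage from $(u,v/\sqrt{\lambda})$ to $(\lambda^{\alpha}u,v)$ is a diagonal positive scaling, it does not alter the winding number, so $\theta(r^*;\eta^*)-\theta(r_*;\eta^*)>j\pi-C$ for a universal $C$; combining with Lemma~\ref{primaprop} on $[0,r_*]$ and on $[r^*,R]$ (each contributing more than $-\pi$) and choosing $j=j(k)$ with $j\pi-C-2\pi>(k+1)\pi$, we obtain $\theta(R;\eta^*(\lambda))>(k+1)\pi$.

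The step I expect to be the main obstacle is the \emph{a priori confinement}: to legitimately invoke Proposition~\ref{rotazioni} one needs the sign conditions, hence $|u(r;\eta^*)|<\delta$, on \emph{all} of $[r_*,r^*]$. I would deal with this by the standard ``first exit time'' argument built into the proof of Proposition~\ref{rotazioni}: as long as $|\check u|$ remains below a fixed threshold the pinching is in force, so the spiral estimates underlying that proposition keep the radius of the rescaled path below a \emph{fixed} bound $\bar\rho_j$ throughout an angular sweep of $j\pi$, and $\rho_j^*$ is chosen with $\bar\rho_j$ smaller than that threshold; consequently either the path never leaves the good region on $[0,\lambda^{\alpha}(r^*-r_*)]$ — and one concludes as above — or it leaves only after having already rotated by more than $j\pi$, which again suffices. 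This confinement, together with the correct choice of the rescaling exponent $\alpha$ so that the rescaled nonlinearity is pinched by \emph{admissible fixed} functions, is where assumption $(g_0)$ is genuinely used; everything else is bookkeeping of the winding number through the reparametrizations.
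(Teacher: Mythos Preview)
Your overall strategy mirrors the paper's: localize to $[r_0^-,r_0^+]$ where $q\ge m>0$, rescale so that this becomes an interval of length tending to $\infty$ with $\lambda$, apply Proposition~\ref{rotazioni}, pick $\eta^*(\lambda)$ via the intermediate value theorem (using $u(\cdot;0)\equiv0$ and $u(\cdot;R+1)\equiv R+1$), and patch with Lemma~\ref{primaprop} on $[0,r_0^-]$ and $[r_0^+,R]$. The gap is in your rescaling of the dependent variables. You set $\check u=\lambda^{\alpha}u$, $\check v=v$, and assert that ``the correct choice of $\alpha$'' makes $\lambda^{1-\alpha}g(\lambda^{-\alpha}\check u)$ pinched between $\lambda$-independent functions of $\check u$. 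That holds only when $g$ is (two-sidedly) asymptotically homogeneous at $0$; assumption $(g_0)$ alone does not guarantee any such $\alpha$. For example, $g(u)=u^{3}$ for $u\ge0$ and $g(u)=u^{5}$ for $u<0$ is $C^{1}$ and satisfies $(g_0)$, yet $\lambda^{1-\alpha}g(\lambda^{-\alpha}\check u)$ equals $\lambda^{1-4\alpha}\check u^{3}$ on the right and $\lambda^{1-6\alpha}\check u^{5}$ on the left, and no single $\alpha$ keeps both bounded above and away from zero. For a flatter $g$ such as $g(u)=u\,e^{-1/|u|}$ one has $\lambda^{1-\alpha}g(\lambda^{-\alpha}\check u)\to0$ for \emph{every} $\alpha>0$, so the lower pinch $b_2(\check u)\check u>0$ demanded by Proposition~\ref{rotazioni} is lost.

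The paper avoids this by \emph{not} rescaling $u$: it sets $x(t)=u(t/\sqrt{\lambda})$, $y(t)=\tfrac{1}{\sqrt{\lambda}}\,v(t/\sqrt{\lambda})$ on $t\in[\sqrt{\lambda}\,r_0^-,\sqrt{\lambda}\,r_0^+]$, so that $Y_\lambda(t,x)=(t/\sqrt{\lambda})^{N-1}\tilde f(t/\sqrt{\lambda},x)$ and hence, for $|x|<\delta$, the quantity $Y_\lambda(t,x)x$ is trapped between $m(r_0^-)^{N-1}g(x)x$ and $\Vert q^+\Vert_{L^\infty}(r_0^+)^{N-1}g(x)x$. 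These bounds are manifestly $\lambda$-independent for \emph{any} $g$ satisfying $(g_0)$, precisely because $g$ is evaluated at the unscaled argument $x=u$. With this change (put all the $\lambda$-scaling into time and into $v$, leave $u$ alone), the rest of your argument---IVT for $\eta^*$, invariance of the half-turn count under diagonal positive scalings, taking $j=k+3$ to absorb the two contributions $>-\pi$ from Lemma~\ref{primaprop}---goes through as you wrote it, and your confinement worry is already handled by the choice of $\rho_j^*\in(0,\delta)$ in Proposition~\ref{rotazioni}.
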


\begin{proof}
We first choose $0 < r_0^- < r_0^+ < R$ such that $q(r) \geq m > 0$ for $r \in [r_0^-, r_0^+]$ and  state the following. 
\smallbreak
\noindent
\underline{Claim}. \emph{There exist $\lambda_k^*> 0$ and $\Gamma_k \in (0, \delta)$ such that, for every $\lambda > \lambda_k^*$, every solution $(u(r), v(r)) =(\rho(r)\cos\theta(r), -\sqrt{\lambda}\rho(r)\sin\theta(r))$ of \eqref{modificato} satisfying $u(r_0^-)^2 + \tfrac{1}{\lambda} v(r_0^-)^2 = \Gamma_k^2$ fulfills 
$$
\theta(r_0^+) - \theta(r_0^-)  > (k + 3)\pi.  
$$
}
\\
\emph{Proof of the claim}. We set 
$$
(x(t), y(t))=\left(u\left(\frac{t}{\sqrt{\lambda}}\right), \frac{1}{\sqrt{\lambda}} v\left(\frac{t}{\sqrt{\lambda}}\right)\right),
$$
for $t \in I_\lambda=[\sqrt{\lambda} r_0^-, \sqrt{\lambda} r_0^+]$. 
It is easily seen that $(x(t), y(t))$ solves the differential system 
$$
\left\{
\begin{array}{l}
\displaystyle x'=\frac{1}{\sqrt{\lambda}}\tilde{\varphi}^{-1}\left(\frac{\lambda^{(N/2)} y}{t^{N-1}}\right)=: X_\lambda(t, y) \vspace{0.1cm}\\
\displaystyle y'= - \frac{t^{N-1}}{\lambda^{(N-1)/2}} \tilde{f}\left(\frac{t}{\sqrt{\lambda}}, x\right)=: -Y_\lambda(t, x).
\end{array}
\right.
$$
In view of the choice of $r_0^-, r_0^+$, we have that, for every $t \in I_\lambda$ and $x \in (-\delta,\delta)$, it holds that
$$
m (r_0^-)^{N-1} g(x) x \leq Y_\lambda(t, x) x \leq \Vert q^+ \Vert_{L^\infty(0, R)} (r_0^+)^{N-1} g(x) x. 
$$ 
On the other hand, since 
$$
\frac{1}{\varphi'(\gamma)} s^2 \leq \tilde{\varphi}^{-1}(s) s \leq s^2, \quad \textrm{ for every } s \in \mathbb{R},
$$
we find that, for every $t \in I_\lambda$ and $y \in (-\delta,\delta)$, 
$$
\frac{1}{\varphi'(\gamma) (r_0^+)^{N-1}} y^2 \leq X_\lambda(t, y) y \leq \frac{y^2}{(r_0^-)^{N-1}}. 
$$
Taking into account that, denoting by $\vartheta(t)$ the angular coordinate associated with the planar path $(x(t), y(t))$ as in \eqref{polari2}, it holds that 
$$
\vartheta(\sqrt{\lambda}r_0^+)-\vartheta(\sqrt{\lambda}r_0^-) = \theta(r_0^+)-\theta(r_0^-), 
$$
Proposition \ref{rotazioni} can be applied with the choice $j=k+3$, providing
$$
\lambda_k^* = \left(\frac{\tau^*_{k+3}}{r_0^+ - r_0^-}\right)^2, \quad \Gamma_k=\rho^*_{k+3}. 
$$
\qed
\smallbreak
\noindent
We can now easily draw the conclusion: indeed, it holds that $u(r; 0) \equiv 0$ and $u(r; R+1) \equiv R+1$, since $\tilde{f}(r, u) = 0$ for $\vert u \vert \geq R+1$. Accordingly, recalling that $\Gamma_k < \delta < R$, there exists $\eta^*(\lambda) \in (0, R+1)$ such that 
\begin{equation}\label{ellisse}
u(r_0^-; \eta^*(\lambda))^2 + \frac{1}{\lambda} v(r_0^-; \eta^*(\lambda))^2 = \Gamma_k^2. 
\end{equation}
Since 
\begin{align*}
\theta(R; \eta^*(\lambda))  & = \theta(r_0^-; \eta^*(\lambda)) - \theta(0; \eta^*(\lambda)) + \theta(r_0^+; \eta^*(\lambda)) - \theta(r_0^-; \eta^*(\lambda)) \\
& \quad + \theta(R; \eta^*(\lambda)) - \theta(r_0^+; \eta^*(\lambda)),
\end{align*}
from the previous Claim and Lemma \ref{primaprop} the statement follows. 
\end{proof}

We are now in a position to conclude. Given $k \geq 1$, we fix $\lambda >  \lambda_k^*$ (where $\lambda_k^*$ is given by Lemma \ref{cruciale}) and  we consider the continuous function 
$$
(0, +\infty) \ni \eta \mapsto \theta(R; \eta) \in \mathbb{R}. 
$$
Of course, $\theta(R;R+1) = 0$; moreover, from Lemmas \ref{piccole} and \ref{cruciale} we infer that $\eta_*(\lambda) < \eta^*(\lambda)$ and that 
$$
\theta(R;\eta_*(\lambda)) < \frac{\pi}{2} < (k+1) \pi < \theta(R;\eta^*(\lambda)). 
$$
Then, the Intermediate Value theorem gives, for any integer $j = 1, \ldots, k$, two positive values $\eta_{s,j}^+, \eta_{l, j}^+$, with 
$$
\eta_*(\lambda) < \eta_{s,j}^+ < \eta^*(\lambda) < \eta_{l, j}^+,
$$ 
such that 
\begin{equation}\label{rotinfo}
\theta(R; \eta_{s,j}^+)= \theta(R; \eta_{l,j}^+) = \left(\frac{1}{2} + j\right)\pi. 
\end{equation}
The solutions 
$$
u_{s,j}^+(r) = u(r;\eta_{s,j}^+), \qquad u_{l,j}^+(r) = u(r;\eta_{l,j}^+) 
$$
are therefore distinct solutions of \eqref{ode}; moreover, from \eqref{rotinfo}, together with the fact that $\theta'(r) \geq 0$ whenever $u(r) = 0$  (compare with the proof of Lemma \ref{primaprop}), it follows that $u_{s,j}^+(r)$ and $u_{l,j}^+(r)$ have exactly $j$ zeros on $[0,R)$ (see also \cite[Lemma 3.8]{BosZan13}).

A similar argument works when $\eta < 0$, giving the other pair of solutions.  
%dire come sono fatte le soluzioni: nodali, grandi e piccole, ....

\subsection{Some numerical simulations} \label{sez3b}

To give a better insight into the statement of Theorem \ref{mainthm}, we collect here below some numerical simulations obtained with MAPLE$^{\copyright}$ software (see Figures 1, 2 and 3 below) for the equation
\begin{equation}\label{equazexp}
\textnormal{div}\, \left(\frac{\nabla u}{\sqrt{1-\vert \nabla u \vert^2}}\right) + \lambda u^3 = 0, \qquad x \in \mathcal{B}_R,
\end{equation}
choosing $N=2$ and $R=10$. The Dirichlet solutions shown are found for $\lambda = 5$; for completeness, we also depict the one-signed solutions already found in \cite{BerJebTor13, CoeCorRiv14} (see Remark \ref{positive}).  
\begin{figure}[!h]
\centering
\includegraphics[scale=0.25]{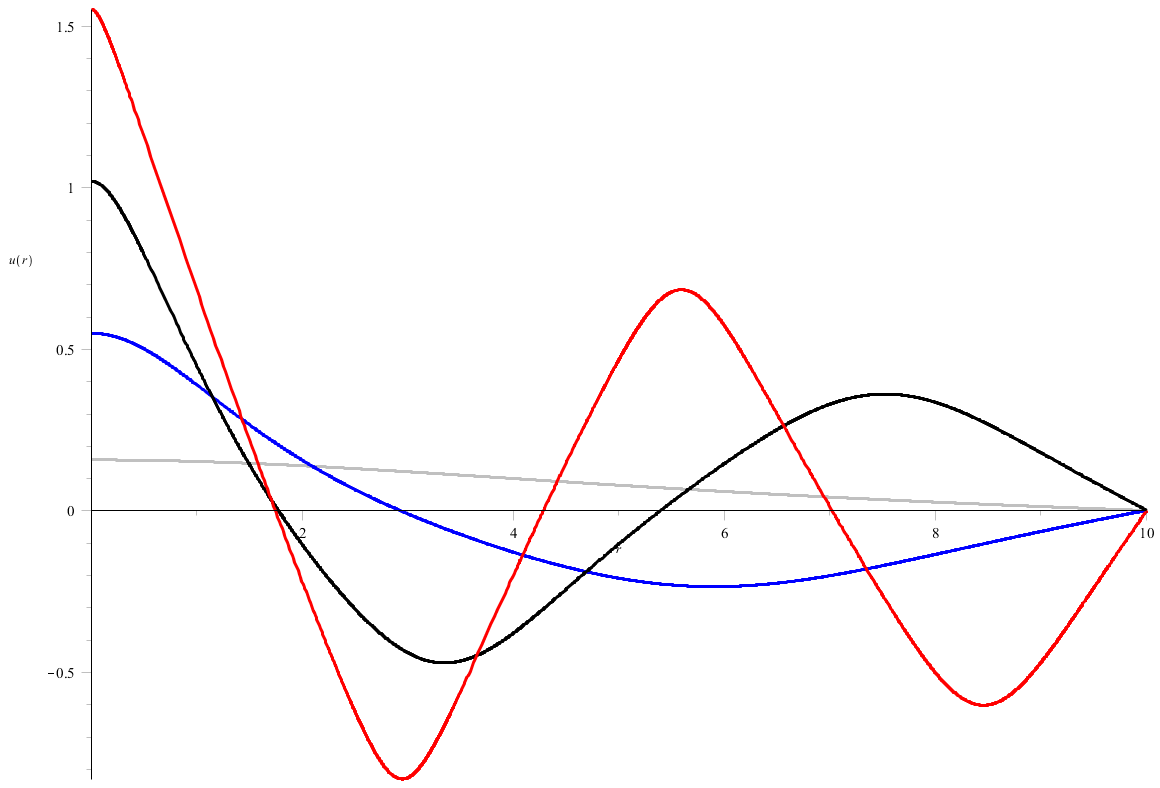}
\label{fig1}
\begin{flushleft}
\caption{We plot the small solutions $u_{s, j}^+(r)$ of equation \eqref{equazexp} for $j=0, 1, 2, 3$ (in grey, blue, black, red, respectively). Notice that we also plotted the positive solution $u_{s, 0}^+$, see Remark \ref{positive} in Section \ref{sez4}. Due to the oddness of the nonlinearity, here $u_{s, j}^-(r)=-u_{s, j}^+(r)$ for every $j=0, 1, 2, 3$.}
\end{flushleft}
\end{figure}

\begin{figure}[!h]
\centering
\includegraphics[scale=0.25]{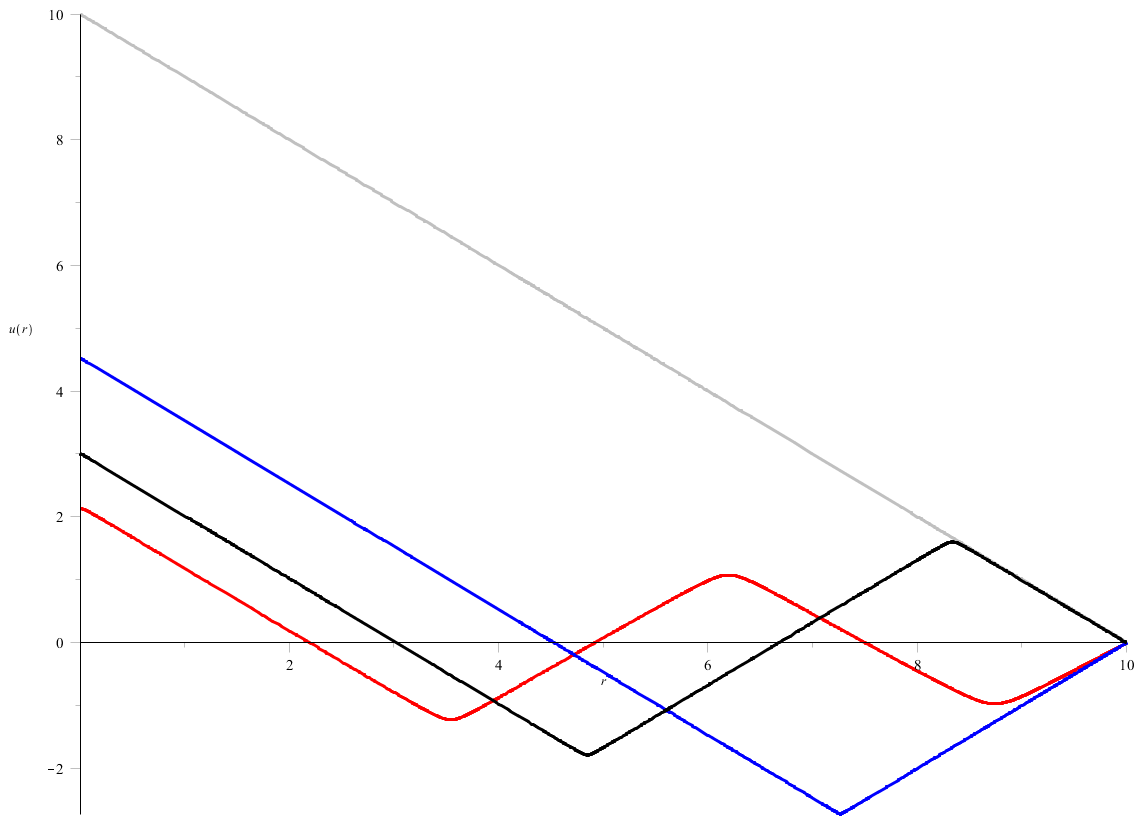}
\label{fig2}
\begin{flushleft}
\caption{We plot the large solutions $u_{l, j}^+(r)$ of equation \eqref{equazexp} for $j=0, 1, 2, 3$ (in grey, blue, black, red, respectively). Again, we plotted the positive solution $u_{l, 0}^+$, as well, and again $u_{l, j}^-(r)=-u_{l, j}^+(r)$.}
\end{flushleft}
\end{figure}

\begin{figure}[!h]
\centering
\includegraphics[scale=0.15]{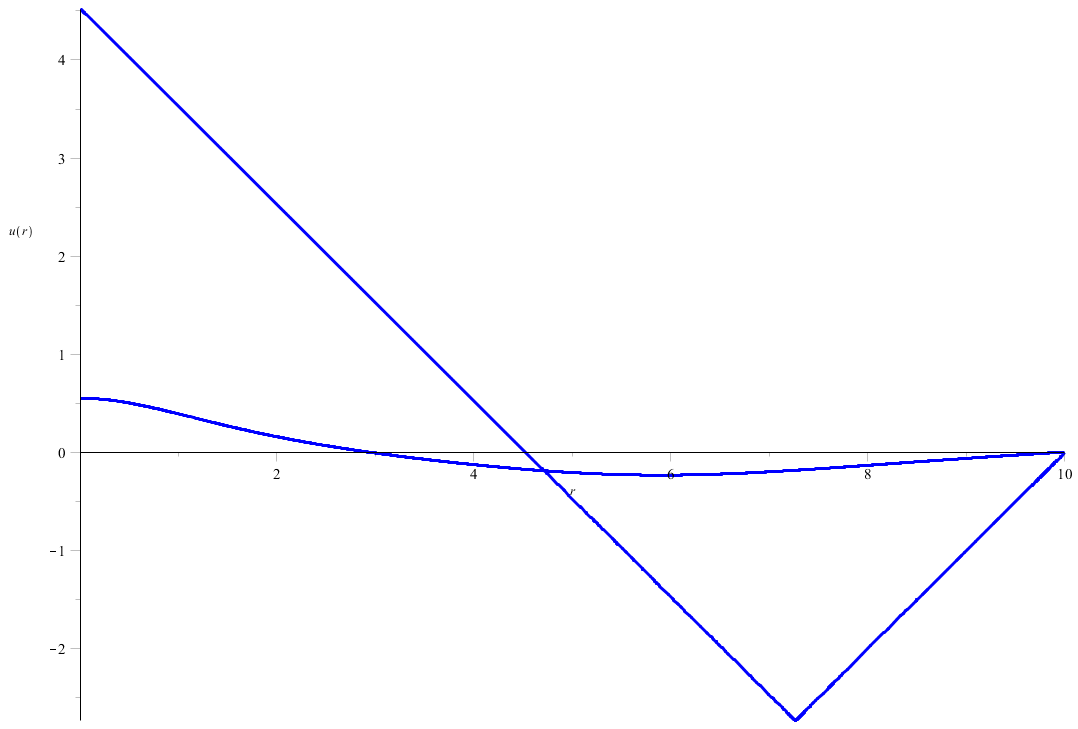}
\quad 
\includegraphics[scale=0.15]{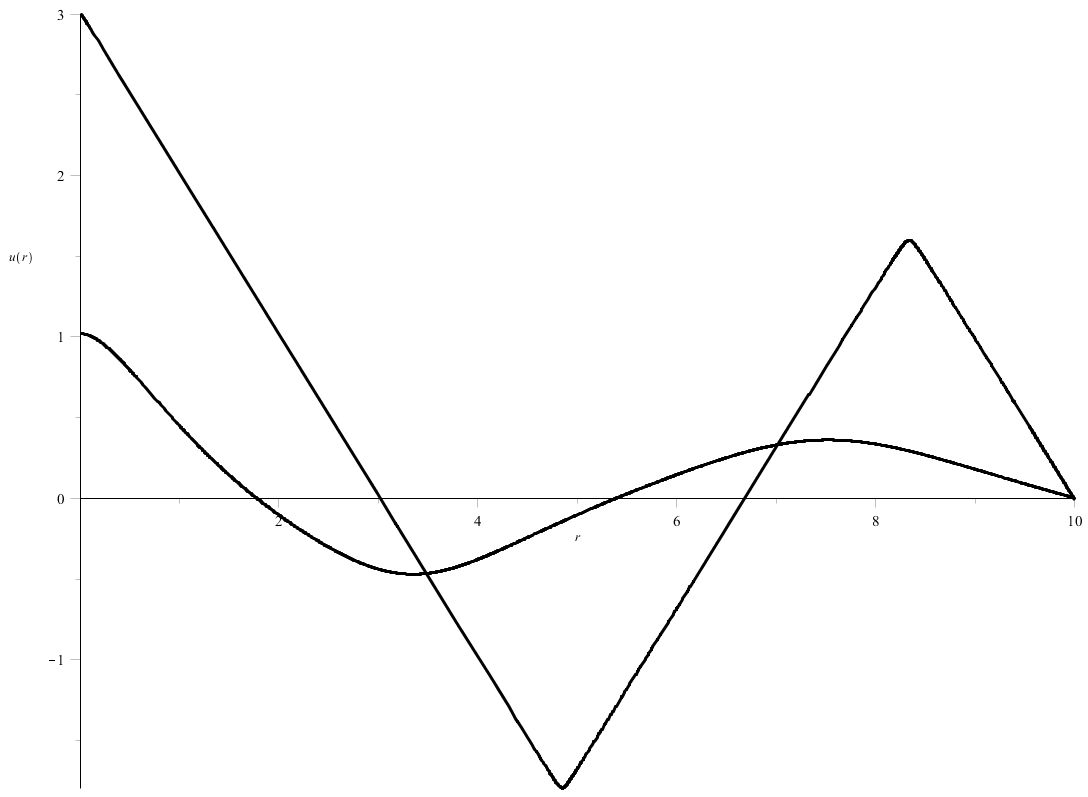}
\quad 
\medbreak
\includegraphics[scale=0.15]{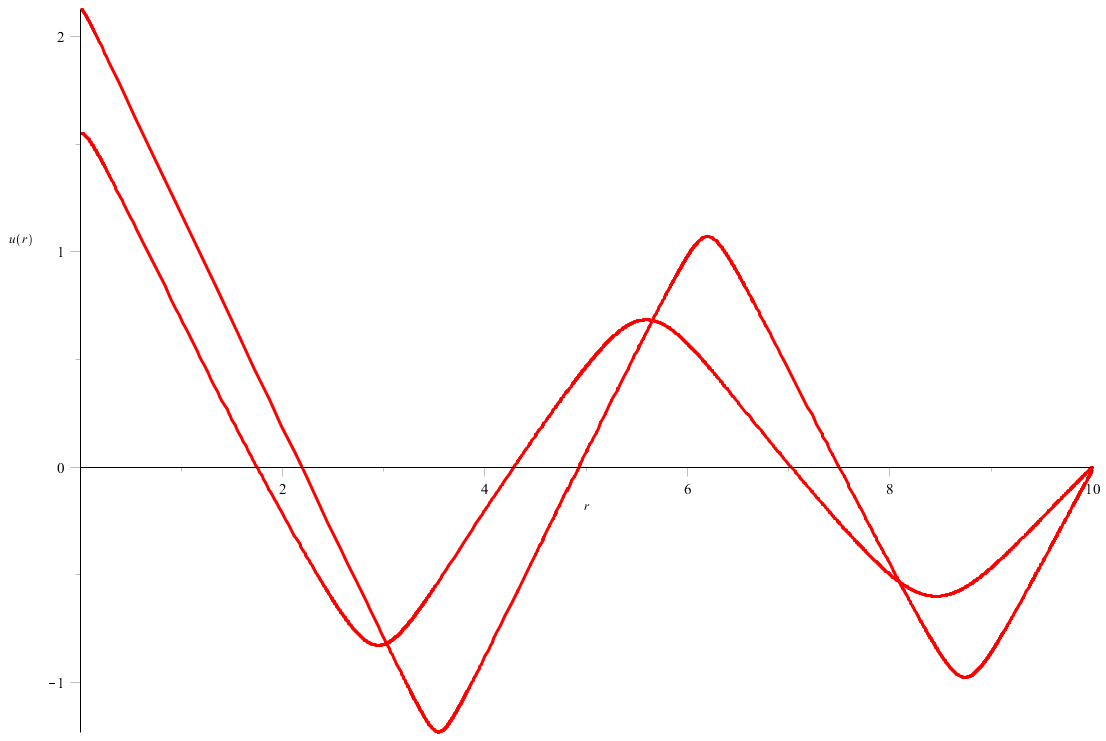}
\label{figura}
\begin{flushleft}
\caption{To better highlight the ``multiple pairs'' scheme, we group together the above displayed smaller and larger solutions of \eqref{equazexp} having the same nodal properties: namely, $u_{s, 1}^+$ and $u_{l, 1}^+$ in blue, $u_{s, 2}^+$ and $u_{l, 2}^+$ in black, $u_{s, 3}^+$ and $u_{l, 3}^+$ in red.}
\end{flushleft}
\end{figure}

The reader will certainly notice how the large solutions $u_{l, j}$ found above appear almost sharp-cornered, with slope approximately equal to $\pm 1$; we remark that, in principle, this is not a drawback of numerics but rather a consequence of the peculiar properties of the Minkowski curvature operator, as an elementary singular perturbation analysis shows. For simplicity, we assume that $q(r) > 0$ for every $r \in [0, R]$ and $g(u) u > 0$ for every $u \neq 0$ and we discuss the case when $\{u_\lambda\}_\lambda$ is a family of \emph{positive} solutions of \eqref{equazione}. Passing to the radial formulation, it is immediate to see that $u_\lambda$ is decreasing; moreover, using the arguments in the proof of Lemma \ref{equivalenza}, we obtain that $\{u_\lambda\}$ is bounded in the $C^1$-norm. By the sequential version of the Banach-Alaoglu theorem, there exists $u_\infty \in W^{1, \infty}$ such that, up to subsequences, $u_\lambda \to u_\infty$ in the weak$^*$-topology of $W^{1, \infty}$. Notice that $u_\infty$ is nonnegative and nonincreasing. Of course, it may be that $u_\infty \equiv 0$ (indeed, this is the case for the family of small positive solutions); however, assume that $u_\infty(r) > 0$ on $[0, \bar{r})$, for some $\bar{r} \in (0, R]$. Setting $v_\lambda(r)=r^{N-1} \varphi(u_\lambda'(r))$ and recalling that $v_\lambda(0)=0$, we have 
$$
v_\lambda(r) = -\lambda \int_0^r q(s) g(u_\lambda(s)) \, ds, \quad \textrm{ for every } r \in (0, R].
$$  
By Fatou Lemma, $v_\lambda(r) \to -\infty$ for any $r \in (0, R]$, so that $u_\lambda'(r) \to -1$ for any $r \in (0, R]$. By Lebesgue theorem, $u_\lambda' \to -1$ strongly in $L^p$ for every $p \geq 1$, so that $u_\infty(r)=R-r$ (notice that, a posteriori, $\bar{r}=R$) and $u_\lambda \to u_\infty$ strongly in $W^{1, p}$ for every $p \geq 1$. Notice that this convergence is sharp: indeed, since $u_\lambda \in C^1$ and $u_\lambda'(0) = 0$, if the convergence were $W^{1, \infty}$ it should be $u_\infty'(0)=0$ (while $u_\infty'(0) = -1$). 
The same argument works for negative solutions and, with some more care, is extendable to nodal solutions as well, showing that the graph of the singular limit $u_\infty(r)$ is a polygonal line with slopes $-1$ and $1$.

\section{Final remarks}\label{sez4}

We conclude the paper with some final remarks about possible extensions of Theorem \ref{mainthm}
and related results.

\begin{remark}\label{positive}
\textbf{One-signed solutions}. The choice $j=0$ in \eqref{rotinfo} is admissible, as well.
In this way, the existence of four one-signed (two positive and two negative) radial solutions 
 directly follows (compare with \cite{BerJebTor13,CoeCorRiv14}).
\end{remark}

\begin{remark}\label{neumann}
\textbf{The Neumann problem}. With similar arguments, we can deal as well with the Neumann problem
$$
\left\{
\begin{array}{ll}
\displaystyle \textnormal{div} \, \left( \frac{\nabla u}{\sqrt{1-\vert \nabla u \vert^2}}\right) + \lambda q(\vert x \vert) g(u) = 0, & \quad x \in \mathcal{B}_R, \vspace{0.2 cm} \\
\partial_\nu u = 0, & \quad x \in \partial \mathcal{B}_R.
\end{array}
\right.
$$
Indeed, when passing to the ODE formulation, one is led to the Neumann problem
$$
\left\{
	\begin{array}{l}
		\displaystyle (r^{N-1} \varphi(u'))' + \lambda r^{N-1} q(r) g(u) = 0 \vspace{0.2 cm}\\
		u'(0)=0, \; u'(R)=0
	\end{array}
	\right.
$$
and it is possible to find solutions by imposing, instead of \eqref{rotinfo}, the condition $\theta(R) = j \pi$, for any integer $j=1,\ldots,k$. It has to be noticed, however, that an analogue of Lemma \ref{equivalenza} can still be established only due to the fact that we deal with nodal solutions (ensuring the validity of \eqref{u}). Incidentally, we observe that in fact one-signed solutions of the Neumann problem typically do not exist (choose $q(r) \geq 0$ and integrate the equation).
\end{remark}

\begin{remark}\label{anelli}
\textbf{Solutions on annuli}. The same result as in Theorem \ref{mainthm2} holds true if the ball $\mathcal{B}_R$ is replaced by the annular domain $\mathcal{A}(R_1,R_2) = \{ x \in \mathbb{R}^N\, : \, R_1 < \vert x \vert < R_2 \}$. Here, the radial formulation reads as the Dirichlet problem
\begin{equation}\label{dir}
\left\{
	\begin{array}{l}
		\displaystyle (r^{N-1} \varphi(u'))' + \lambda r^{N-1} q(r) g(u) = 0 \vspace{0.2 cm}\\
		u(R_1)=0, \; u(R_2)=0
	\end{array}
	\right.
\end{equation}
and an analogue of Lemma \ref{equivalenza} holds true also in this case, since both $u(r)$ and $u'(r)$ vanish at least once in
$[R_1,R_2]$ (ensuring the validity of formulas \eqref{u'} and \eqref{u}). One can thus set, for the truncated equivalent problem, the shooting scheme
$$
u(R_1) = 0, \qquad u'(R_1) = \eta
$$
and look for solutions satisfying $\theta(R_2;\eta) - \theta(R_1;\eta) = j \pi$, for $j=1,\ldots,k$. The previous procedure of proof can still be followed, but unexpectedly some extra work is needed. Summarizing the main steps, it should be proved that:
\begin{itemize}
\item[-] $\theta(R_2;\eta) - \theta(R_1;\eta) < \frac{\pi}{2}$ for $0 < \vert \eta \vert$ small enough, as can be shown using arguments on the lines of the ones in Lemma \ref{piccole};
\item[-] $\theta(R_2;\eta) - \theta(R_1;\eta) < \frac{\pi}{2}$ for $\vert \eta \vert$ large enough, this being a consequence of the so-called elastic property (i.e., $u^2(r;\eta) + v^2(r;\eta) \to \infty$ uniformly when $\vert \eta \vert \to \infty$,
see \cite[Lemma 2]{Zan96} and notice that the differential equation in \eqref{dir} is not anymore singular) and of the fact that $\tilde f(r,u) \equiv 0$ for $\vert u \vert \geq R+1$;
\item[-] $\theta(R_2;\eta^*(\lambda)) - \theta(R_1;\eta^*(\lambda)) > k \pi$ for $\lambda > \lambda^*_k$, this being provable as in Lemma \ref{cruciale} using the elastic property once more to prove that \eqref{ellisse} holds true.
\end{itemize}
We refer the reader to \cite{BosZan13} for a detailed proof following this scheme, even if in a different setting. 
Combining with the previous Remark \ref{neumann}, one deduces the validity of Theorem \ref{mainthm2} for the Neumann problem on an annulus, as well.
\end{remark}

\begin{remark}
\textbf{The periodic problem}. When $q: \mathbb{R} \to \mathbb{R}$ is a continuous and $T$-periodic function ($T > 0$), a version of Theorem \ref{mainthm2} can also be stated for the $T$-periodic problem associated with the differential equation 
$$
\left(\frac{u'}{\sqrt{1-(u')^2}} \right)' + \lambda q(t) g(u) = 0.  
$$
In this case, instead of using the above shooting procedure, one has to apply the Poincar\'e-Birkhoff fixed point theorem. We briefly illustrate here below the steps of the proof, referring again the reader to \cite{BosZan13}:
\begin{itemize}
\item[-] we pass to the truncated problem $(\tilde\varphi(u'))' + \lambda \tilde{f}(t, u) = 0$; the analogue of Lemma \ref{equivalenza} holds true since both $u(t)$ and $u'(t)$ vanish at least once in $[0, T]$ (see the corresponding discussions in Remarks \ref{neumann} and \ref{anelli});
\item[-] we write the truncated equation as the planar Hamiltonian system 
$$
u' = \tilde\varphi^{-1}(v), \qquad v'=-\lambda \tilde{f}(t, u),
$$ 
and we denote by $(u(t; u_0, v_0), v(t; u_0, v_0))$ the solution satisfying the initial conditions $u(0)=u_0, v(0)=v_0$; 
\item[-] we pass to polar-like coordinates 
$$
\left\{
\begin{array}{l}
u(t; u_0, v_0) = \rho(t; u_0, v_0) \cos \theta(t; u_0, v_0) \vspace{0.1cm}\\
v(t; u_0, v_0) = - \sqrt{\lambda}\rho(t; u_0, v_0) \sin \theta(t; u_0, v_0);
\end{array}
\right.
$$
\item[-] we prove that $\theta(T; u_0, v_0) - \theta(0, u_0, v_0) < 2\pi$ both when $0 < u_0^2 + v_0^2$ is small enough and when it is large enough. This can be 
done again similarly as in the corresponding steps of Remark \ref{anelli};
\item[-] we prove that there exists $\lambda^*_k$ such that, for every $\lambda > \lambda^*_k$, there exists $\eta^*(\lambda)$ with $\theta(T; u_0, v_0) - \theta(0, u_0, v_0) > 2k\pi$ if $u_0^2 + v_0^2 = \eta^*(\lambda)^2$. This can be 
shown as in Lemma \ref{cruciale} (without loss of generality, we can assume $q(0) > 0$ so that the argument therein can be made slightly simpler). 
\end{itemize}
The conclusion then follows from the Poincar\'e-Birkhoff fixed point theorem. We also mention that, combining the arguments in the present paper with the ones in \cite[Section 3]{CorOmaZan16} it should be possible to prove the existence of pairs of \emph{subharmonic} solutions (namely, $mT$-periodic solutions for $m > 1$) for the equation 
$$
\left(\frac{u'}{\sqrt{1-(u')^2}} \right)' + q(t) g(u) = 0.  
$$
In this case, the largeness of $\lambda$ is replaced by the width of the periodicity interval (this requiring, however, that $q(t) > 0$ for every $t$). For the sake of briefness, we omit the details. 
\end{remark}
\begin{remark}
\textbf{Some complementary situations}. We finally observe that the superlinearity assumption in $(g_0)$, namely $\lim_{u \to 0} g(u)/u = 0$, is used only to ensure the validity of Lemma \ref{piccole}. Accordingly, a ``double gap'' for the winding number can be established, thus producing solutions \emph{in pairs}. If no asymptotic hypothesis at zero is required (keeping, however, the validity of the local sign assumption $g(u) u > 0$ for $u \neq 0$), then $2k$ solutions are still preserved for $\lambda > \lambda^*_k$, thanks to the gap between intermediate and large solutions. 
Of course, depending on the growth condition at zero, a more precise picture may be displayed. For instance, combining the arguments of the present paper with the ones in \cite{CapDamZan01}, it is possible to find, for every $\lambda > 0$  (and assuming $q(\vert x \vert)> 0$), infinitely many nodal radial solutions of the equation 
$$
\textnormal{div} \, \left( \frac{\nabla u}{\sqrt{1-\vert \nabla u \vert^2}}\right) +  \lambda q(\vert x \vert) \vert u \vert^{p-1} u = 0, \qquad 0 < p < 1,
$$
having any arbitrary integer number of nodal regions. For completeness, we also mention \cite{BosGar13} for the case when $g(u)$ is linear at zero, in the context of the one-dimensional periodic problem.  
\end{remark}
\bigbreak
\noindent
\textbf{Acknowledgement.} The authors were supported by the INDAM-GNAMPA Project 2016 ``Problemi
differenziali non lineari: esistenza, molteplicit\`a e propriet\`a qualitative delle
soluzioni''. The first author also acknowledges the support of the project
ERC Advanced Grant 2013 n. 339958 ``Complex Patterns for Strongly Interacting
Dynamical Systems - COMPAT''.

\end{document}